\newtheorem{theorem}{Theorem}[section]
\newtheorem{lemma}[theorem]{Lemma}
\newtheorem{corollary}[theorem]{Corollary}
\newtheorem*{theorem*}{Theorem}
\DeclarePairedDelimiter{\ceil}{\lceil}{\rceil}
\DeclarePairedDelimiter{\floor}{\lfloor}{\rfloor}
\newcommand{\be}{\begin{equation}}
\newcommand{\ee}{\end{equation}}
\newcommand{\ba}{\begin{eqnarray*}}
\newcommand{\ea}{\end{eqnarray*}}
\newcommand{\pPell}{p_k}
\newcommand{\Mod}[1]{\ (\mathrm{mod}\ #1)}
\newcommand\Item[1][]{%
  \ifx\relax#1\relax  \item \else \item[#1] \fi
  \abovedisplayskip=0pt\abovedisplayshortskip=0pt~\vspace*{-\baselineskip}}
\DeclarePairedDelimiter{\norm}{\lVert}{\rVert}
\title[Base-$q$ and Ostrowski sum-of-digits functions]{Joint distribution in residue classes of the base-$q$ and Ostrowski digital sums}
\date{}
\author[Divyum Sharma]{Divyum Sharma}
\address{Department of Pure Mathematics, University of Waterloo, Ontario, Canada.}
\email{divyum.sharma\symbol{64}uwaterloo.ca}
\subjclass[2010]{11A63, 11A55, 11J71}
\keywords{Ostrowski representation, Sum of digits function, Joint distribution}
\begin{document}
	\begin{abstract}
 Let $q$ be an integer $\geq 2$ and let $S_q(n)$ denote the sum of digits of $n$ in base $q$.
For
	\[
	\alpha=[0;\overline{1,m}],\ m\geq 2,
	\]
let $S_{\alpha}(n)$ 
denote the sum of digits in the Ostrowski $\alpha$-representation of $n$. Let $m_1,m_2\geq 2$ be integers with $$\gcd(q-1,m_1)=\gcd(m,m_2)=1.$$ We prove that there exists $\delta>0$ such that for all integers $a_1,a_2$,
		\begin{align*}
		&|\{0\leq n<N: S_{q}(n)\equiv a_1\Mod{m_1},\ S_{\alpha}(n)\equiv a_2\Mod{m_2}\}|\\
		&=\frac{N}{m_1m_2}+O(N^{1-\delta}).
		\end{align*}
		The asymptotic relation implied by this equality was proved by Coquet, Rhin \& Toffin and the equality was proved for the case $\alpha=[\ \overline{1}\ ]$ by Spiegelhofer.
	\end{abstract}
\maketitle
 \section{Introduction}
 Let $q$ be an integer $\geq 2$ and let $S_q(n)$ denote the sum of digits of $n$ in base $q$. Gel$\cprime$fond \cite{Ge} proved that if $m$ is a positive integer coprime to $q-1$, then the function $S_q(n)$ is uniformly distributed modulo $m$. Further, he conjectured that if $q_1,q_2,m_1,m_2$ are integers $\geq 2$ with $\gcd(q_1,q_2)=\gcd(m_1,q_1-1)=\gcd(m_2,q_2-1)=1$, then 
  there exists $\delta=\delta(q_1,q_2,m_1,m_2)>0$ such that 
 		\begin{align*}
		&|\{0\leq n<N: S_{q_1}(n)\equiv a_1\Mod{m_1},\ S_{q_2}(n)\equiv a_2\Mod{m_2}\}|\\
		&=\frac{N}{m_1m_2}+O(N^{1-\delta})
		\end{align*}
for all integers $a_1,a_2$. The asymptotic relation implied in this conjecture was proved by B\'esineau \cite{Be}; while the conjecture was proved in its full strength by Kim \cite{Kim} (See also \cite{Rec} for some recent improvements of Kim's result). We mention that a different type of joint distribution related to sum of digits functions was studied by Solinas \cite{Sol}.

In this paper we are concerned with the above problem for the sum of digits functions of the base-$q$ and \textit{Ostrowski representation} of integers.
In 1922, Ostrowski \cite{Os} discovered a numeration system based on continued fractions. He showed that the sequence of the denominators of the convergents to the simple continued fraction expansion of an irrational number forms the basis for a numeration system. More precisely, he proved the following result.
\begin{theorem}\cite[Theorem 3.9.1]{AS}\label{Ostro}
Let $\alpha$ be an irrational real number having continued fraction expansion $[a_0;a_1,\ldots]$. Let $(q_n)_{n\geq 0}$ be the sequence of the denominators of the convergents to the continued fraction expansion. Then every non-negative integer $n$ can be expressed uniquely as
	\be\label{decomp}
	n=\sum\limits_{0\leq i\leq \ell} b_iq_i,
	\ee
where the $b_i$'s are integers satisfying
	\begin{enumerate}[(i)]
	\item $0\leq b_0<a_1$.
	\item $0\leq b_i\leq a_{i+1}$ for $i\geq 1$.
	\item For $i\geq 1$, if $b_i=a_{i+1}$, then $b_{i-1}=0$.
	\end{enumerate}
\end{theorem}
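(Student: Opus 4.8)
The plan is to prove existence and uniqueness \emph{simultaneously} by a strong induction on the number of available digits, establishing that the admissible digit strings of a given length are in bijection with an initial segment of the non-negative integers. Throughout I will use only the standard recurrence for the convergent denominators, namely $q_{-1}=0$, $q_0=1$, and $q_{j}=a_{j}q_{j-1}+q_{j-2}$ for $j\ge 1$, together with the fact that $a_j\ge 1$ for $j\ge 1$, so that $(q_j)_{j\ge 1}$ is strictly increasing and tends to infinity. For each $k\ge 0$ let $R_k$ denote the set of integer tuples $(b_0,\ldots,b_k)$ satisfying conditions (i)--(iii) (with (ii) and (iii) imposed for every index $1\le i\le k$), and set $\phi_k(b_0,\ldots,b_k)=\sum_{i=0}^{k}b_iq_i$, with the convention that $R_{-1}$ is the empty tuple mapping to $0$. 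The central claim is that
\[
\phi_k:R_k\longrightarrow\{0,1,\ldots,q_{k+1}-1\}
\]
is a bijection for every $k$.

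For the base case $k=0$ the admissible strings are exactly $b_0\in\{0,\ldots,a_1-1\}$ by condition (i), and $\phi_0(b_0)=b_0$ ranges bijectively over $\{0,\ldots,q_1-1\}$ since $q_1=a_1$. For the inductive step I would fix the top digit $b_k$ and split into two cases according to condition (iii). If $b_k\in\{0,\ldots,a_{k+1}-1\}$, then (iii) imposes no restriction on $b_{k-1}$, so $(b_0,\ldots,b_{k-1})$ ranges over all of $R_{k-1}$; by induction $\phi_{k-1}$ sweeps out $\{0,\ldots,q_k-1\}$, so these strings produce exactly the block $\{b_kq_k,\ldots,b_kq_k+q_k-1\}$, and as $b_k$ runs over $\{0,\ldots,a_{k+1}-1\}$ these blocks tile $\{0,\ldots,a_{k+1}q_k-1\}$ without gaps or overlaps. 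If instead $b_k=a_{k+1}$, then (iii) forces $b_{k-1}=0$, so $(b_0,\ldots,b_{k-2})$ ranges over $R_{k-2}$ and contributes $\{0,\ldots,q_{k-1}-1\}$; these strings produce the block $\{a_{k+1}q_k,\ldots,a_{k+1}q_k+q_{k-1}-1\}$. The recurrence $q_{k+1}=a_{k+1}q_k+q_{k-1}$ identifies this last block with $\{a_{k+1}q_k,\ldots,q_{k+1}-1\}$, which glues onto the previous interval to give precisely $\{0,\ldots,q_{k+1}-1\}$. Since the two cases are disjoint and jointly surjective, $\phi_k$ is a bijection, completing the induction.

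With the claim in hand, existence and uniqueness for an arbitrary non-negative integer $n$ follow at once. Given $n$, choose $k$ so large that $n<q_{k+1}$ (possible since $q_j\to\infty$); bijectivity of $\phi_k$ yields a unique admissible tuple $(b_0,\ldots,b_k)$ with $\sum_{i=0}^k b_iq_i=n$. To present the representation in the stated form one discards trailing zeros, taking $\ell$ to be the largest index with $b_\ell\ne 0$. I would then check that this is independent of the truncation length $k$: appending a zero digit to an admissible string keeps it admissible, since condition (iii) is vacuous for a zero digit and the older constraints are untouched, so the digit strings obtained for different large $k$ agree after padding and the nonzero digits are canonical.

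The only place where the defining feature of the Ostrowski system is genuinely used is the case $b_k=a_{k+1}$ of the inductive step, and this is where I expect the main (though modest) difficulty to lie. There condition (iii) is exactly the mechanism that collapses the count of admissible lower strings from $q_k$ down to $q_{k-1}$, so that the top block has the correct length $q_{k-1}=q_{k+1}-a_{k+1}q_k$ and abuts the preceding interval without overlap. Dropping (iii) would destroy injectivity: for instance $q_k$ would then admit both $q_k=1\cdot q_k$ and $q_k=a_kq_{k-1}+1\cdot q_{k-2}$, the latter forbidden precisely by (iii). Thus the crux is to verify that (iii) yields an \emph{exact} tiling rather than an overlap; the remaining steps are bookkeeping with the recurrence $q_{j+1}=a_{j+1}q_j+q_{j-1}$. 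As an alternative route one can construct the digits directly by a greedy algorithm, choosing the largest $\ell$ with $q_\ell\le n$, setting $b_i=\lfloor r_i/q_i\rfloor$ for the successive remainders $r_i$, and reading (ii) and (iii) off the same recurrence; but the inductive bijection has the merit of delivering existence and uniqueness in a single stroke.
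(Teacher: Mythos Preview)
Your argument is correct. The strong-induction bijection $\phi_k:R_k\to\{0,\ldots,q_{k+1}-1\}$ is set up and executed cleanly: the two cases on the top digit tile the target interval exactly, the recurrence $q_{k+1}=a_{k+1}q_k+q_{k-1}$ is used in the right place, and the convention $R_{-1}=\{(\,)\}$ with $\phi_{-1}(\,)=0$ handles the boundary $k=1$ without fuss. The closing remarks on compatibility under padding by zeros are also fine.

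There is nothing to compare your approach to, however: the paper does not give its own proof of this statement. It is quoted as background (with the citation to \cite[Theorem~3.9.1]{AS}) and then used throughout. What you have written is essentially the standard counting proof one finds in the cited reference; the greedy-algorithm alternative you mention at the end is the other standard route and would also be acceptable. Either way, your proposal stands on its own as a valid proof of the quoted theorem.
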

Note that Condition $(iii)$ above states that the recurrence relation $q_n=a_nq_{n-1}+q_{n-2}$ cannot be used to replace a linear combination of summands with another summand. The expression given in \eqref{decomp} is called the \textit{Ostrowski $\alpha$-representation} of $n$. See \cite{sur} for a survey on the connections between the Ostrowski numeration systems and combinatorics of words, and \cite{dyn} for a study of ergodic and topological-dynamical properties of various dynamical systems associated to Ostrowski $\alpha$-representations.
 
 If the $\alpha$-representation of a positive integer $n$ is given by
	\be\label{nLehmer}
	n=\sum\limits_{0\leq i\leq \ell} b_i(n)q_i,
	\ee
let 
	\[
	S_{\alpha}(n)=\sum\limits_{0\leq i\leq \ell} b_i(n)
	\]
be the sum of digits. In \cite{Co}, Coquet, Rhin \& Toffin studied the relation between the functions $S_q(n)$ and
 $S_{\alpha}(n)$. They proved the following theorem.
	\begin{theorem*}
	Let $q$ be an integer $\geq 2$ and let $\alpha$ be an irrational real number. The sequence $n\rightarrow xS_q(n)+yS_{\alpha}(n)$ is uniformly distributed modulo $1$ if and only if at least one of $x$ and $y$ is irrational.
	\end{theorem*}

In \cite{Spt}, Spiegelhofer considered the case when
$$\alpha=\frac{1+\sqrt{5}}{2}=[\overline{1}].$$
(Note that in this case, the sequence $(q_n)$ is the sequence of Fibonacci numbers and that every non-negative integer can be uniquely expressed as a sum of non-consecutive Fibonacci numbers. This representation is known as the \textit{Zeckendorf representation} of integers (see \cite{Ze}).) 
He proved that if $\theta\in\mathbb{R}$ and $\gamma\in\mathbb{R}\setminus\mathbb{Z}$, then
		\[
		\sum\limits_{n<N}e(\theta S_{q}(n)+\gamma S_{\alpha}(n))=O(N^{1-\delta})
		\]
	for some $\delta>0$. (Throughout this paper, $e(x)$ denotes $\exp(2\pi ix)$.)
As a consequence, he obtained
	\begin{theorem*}\cite[Corollary 5.3]{Spt}
	Let $\alpha=(1+\sqrt{5})/2$ and let $q,m_1,m_2$ be integers $\geq 2$ with $\gcd(m_1,q-1)=1$. There exists $\delta>0$ such that for all integers $a_1,a_2$,
		\begin{align*}
		&|\{0\leq n<N: S_{q}(n)\equiv a_1\Mod{m_1},\ S_{\alpha}(n)\equiv a_2\Mod{m_2}\}|\\
		&=\frac{N}{m_1m_2}+O(N^{1-\delta}).
		\end{align*}
	\end{theorem*}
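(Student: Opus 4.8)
We outline the argument. The plan is to reduce, by Fourier analysis, to exponential‑sum estimates that are then treated digit by digit in each numeration system. By the orthogonality relations in $\mathbb Z/m_1\mathbb Z\times\mathbb Z/m_2\mathbb Z$, for all integers $a_1,a_2$,
\[
\bigl|\{0\le n<N:\ S_q(n)\equiv a_1\Mod{m_1},\ S_\alpha(n)\equiv a_2\Mod{m_2}\}\bigr|
=\frac{1}{m_1m_2}\sum_{j_1=0}^{m_1-1}\sum_{j_2=0}^{m_2-1}e\!\Bigl(-\tfrac{j_1a_1}{m_1}-\tfrac{j_2a_2}{m_2}\Bigr)\,T_N(j_1,j_2),
\]
where $T_N(j_1,j_2)=\sum_{0\le n<N}e\bigl(\tfrac{j_1}{m_1}S_q(n)+\tfrac{j_2}{m_2}S_\alpha(n)\bigr)$. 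The pair $(j_1,j_2)=(0,0)$ contributes exactly $N$, so the theorem follows once a $\delta>0$ is produced, uniform in the pair, with $T_N(j_1,j_2)\ll N^{1-\delta}$ for every $(j_1,j_2)\not\equiv(0,0)$. Put $\theta=j_1/m_1$, $\gamma=j_2/m_2$; note $\theta\notin\mathbb Z$ whenever $j_1\not\equiv0$ and $\gamma\notin\mathbb Z$ whenever $j_2\not\equiv0$.

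First I would dispose of the two one‑sided cases. If $j_2=0$, then $T_N(j_1,0)=\sum_{n<N}e(\theta S_q(n))$ is Gel$'$fond's sum: the base‑$q$ expansion of $n$ gives $T_{q^L}(j_1,0)=\bigl(\sum_{d=0}^{q-1}e(d\theta)\bigr)^{L}$, with $|\sum_{d=0}^{q-1}e(d\theta)|<q$ since $\theta\notin\mathbb Z$, and treating the incomplete leading block of a general $N$ yields $T_N(j_1,0)\ll N^{1-\delta}$. If $j_1=0$, then $T_N(0,j_2)=\sum_{n<N}e(\gamma S_\alpha(n))$, and here I would exploit that the continued fraction of $\alpha=[0;\overline{1,m}]$ is purely periodic of period $2$. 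The Ostrowski strings admissible under conditions (i)--(iii) of Theorem~\ref{Ostro} form a sofic system read by a two‑state automaton — the states recording whether the previous digit vanishes — whose period‑$2$ transition matrices, weighted by $e(\gamma\cdot(\text{digit}))$, are, up to the choice of boundary vectors, of the shape
\[
U_\gamma=\begin{pmatrix}1&1\\ \sum_{d=1}^{m}e(d\gamma)&\sum_{d=1}^{m-1}e(d\gamma)\end{pmatrix}\ \ (\text{odd positions}),\qquad
V_\gamma=\begin{pmatrix}1&1\\ e(\gamma)&0\end{pmatrix}\ \ (\text{even positions}).
\]
Then $\sum_{n<q_{2K}}e(\gamma S_\alpha(n))$ is a fixed boundary coordinate of $(V_\gamma U_\gamma)^{K}$. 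Since $\bigl|\sum_{d=0}^{m}e(d\gamma)\bigr|<m+1$ (because $\gamma\notin\mathbb Z$), the matrix $|V_\gamma U_\gamma|$ is dominated entrywise by $V_0U_0=\bigl(\begin{smallmatrix}m+1&m\\ 1&1\end{smallmatrix}\bigr)$, with strict inequality in at least one entry; as $V_0U_0$ is a positive matrix, the Perron--Frobenius comparison principle forces the spectral radius of $V_\gamma U_\gamma$ to be strictly below that of $V_0U_0$, which equals $\lambda_0:=\tfrac12\bigl(m+2+\sqrt{m^2+4m}\bigr)$. Because $q_{2K}\asymp\lambda_0^{K}$, this gives $\sum_{n<q_{2K}}e(\gamma S_\alpha(n))\ll q_{2K}^{1-\delta}$, and the passage to a general $N$ through the greedy Ostrowski expansion of $N$ is a routine telescoping.

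The decisive case is $j_1\not\equiv0$ and $j_2\not\equiv0$, where $\theta,\gamma\notin\mathbb Z$ and one must quantify the heuristic that the base‑$q$ and $\alpha$‑Ostrowski structures are independent. Following Coquet--Rhin--Toffin and Spiegelhofer, the strategy is to use that the Ostrowski digits of $n$ are governed by the orbit $(\{n\alpha\})_n$ of the rotation by $\alpha$: fixing a window of low Ostrowski positions, one replaces, cylinder by cylinder, $e(\gamma S_\alpha(n))$ by a value of a fixed step function of $\{n\alpha\}$ (up to the boundary contribution of (iii) at the edge of the window and the truncation of the digit sum, both controlled), approximates that step function by a short trigonometric polynomial $\sum_{|d|\le D}c_d\,e(d\,\cdot)$ via a Beurling--Selberg/Vaaler majorant, and is thereby reduced to twisted base‑$q$ sums $\sum_{n<N}e(\theta S_q(n)+\beta n)$, to be bounded by $\ll N^{1-\delta}$ uniformly in $\beta$. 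This last uniform bound is where $\gcd(q-1,m_1)=1$ enters: in $\sum_{n<q^L}e(\theta S_q(n)+\beta n)=\prod_{k<L}\sum_{d'=0}^{q-1}e\!\bigl(d'(\theta+q^k\beta)\bigr)$, two consecutive factors cannot both be near‑maximal in modulus, since that would force $\theta+q^k\beta$ and $\theta+q^{k+1}\beta$ both near $\mathbb Z$, hence $(q-1)\theta$ near $\mathbb Z$, contrary to $\gcd(q-1,m_1)=1$; so at least half the factors are $\le q-\varepsilon_0$ and the product is $\ll q^{L(1-\delta)}$. Combining this (on the base‑$q$‑aligned sub‑blocks into which each cylinder is split) with the preceding estimates, and summing the $O(D)$ contributions, then gives $T_N(j_1,j_2)\ll N^{1-\delta}$.

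The main obstacle is precisely to make this last step quantitative enough to yield a \emph{power} saving in the joint case — Coquet--Rhin--Toffin already gives the qualitative uniform distribution, and upgrading it to $O(N^{1-\delta})$, as Spiegelhofer did for the Fibonacci numeration, requires balancing the two incommensurable scales (powers of $q$ versus the convergent denominators $q_k\asymp\lambda_0^{k/2}$). The decisive arithmetic input is that $q$ and the quadratic unit $\lambda_0$ are multiplicatively independent: if $q^{a}=\lambda_0^{b}$ then, applying the nontrivial Galois automorphism and using $\lambda_0\lambda_0'=1$, one would get $\lambda_0^{2b}=1$, which is impossible; the resulting effective separation of $a\log q$ from $b\log\lambda_0$ produces the saving. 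Finally, the coprimality hypotheses $\gcd(q-1,m_1)=\gcd(m,m_2)=1$ are exactly the non‑degeneracy conditions for the two one‑variable problems — $\gcd(q-1,m_1)=1$ being Gel$'$fond's condition, which rules out the base‑$q$ resonance above, and $\gcd(m,m_2)=1$ playing the parallel role for the Ostrowski system, whose effective digit range at the period‑$2$ positions is $m$ — ensuring that no nontrivial character sum degenerates to its trivial size. The orthogonality reduction, the Gel$'$fond estimate, the Perron--Frobenius spectral‑gap argument, the trigonometric approximation, and the reduction of a general $N$ to the complete ranges $q^{L}$ and $q_{2K}$ are otherwise routine.
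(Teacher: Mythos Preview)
Your orthogonality reduction and the handling of the one-sided cases are correct and match the paper's deduction of Corollary~\ref{cor1} from Theorem~\ref{thm1} (the case $j_2=0$ via Gel$'$fond's estimate, the rest via the exponential-sum bound). The genuine gap is in the joint case $j_1,j_2\not\equiv 0$. Your plan is to approximate $e(\gamma S_\alpha(n))$ by a trigonometric polynomial in $n\alpha$ and reduce to twisted Gel$'$fond sums $\sum_{n<N}e(\theta S_q(n)+\beta n)$, bounded uniformly in $\beta$. But $n\mapsto e(\gamma S_\alpha(n))$ is \emph{not} a function of $\{n\alpha\}$; only the truncation $S_{\alpha,k}(n)$ is (Corollary~\ref{criteria}), and there is no useful pointwise relation between $S_\alpha(n)$ and $S_{\alpha,k}(n)$. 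Even granting a truncation, the resulting step function has $\asymp q_k$ jumps, so the Vaaler degree $D$ must be $\gtrsim q_k$, and summing $O(D)$ uniform Gel$'$fond bounds destroys the saving. Your invocation of the multiplicative independence of $q$ and $\lambda_0$ is not an argument for a power saving here, and indeed neither Spiegelhofer's proof nor the paper's uses any such input.

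What is actually done (and what you are missing) is the Weyl--van der Corput inequality, Lemma~\ref{corput}. Differencing replaces $g(n)=e(\theta S_q(n)+\gamma S_\alpha(n))$ by $g(n+r)\overline{g(n)}$, and the key point is that for small $r$ and most $n$, both $S_q(n+r)-S_q(n)$ and $S_\alpha(n+r)-S_\alpha(n)$ depend only on low digits (Lemma~\ref{trunc}); this is what permits simultaneous truncation to $S_{q,t}$ and $S_{\alpha,k}$ with a controllable error. Only then are the truncated functions Fourier-expanded (\eqref{dft1}, \eqref{dft2}, Lemma~\ref{prop1}), and the main saving comes not from a Gel$'$fond-type estimate on the $q$-side but from the uniform exponential decay of the Ostrowski coefficients (Lemma~\ref{matrix}) together with the discrepancy bound of Lemma~\ref{discrep}; the $q$-side contributes only through Parseval \eqref{Parseval}. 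In particular Theorem~\ref{thm1} requires no hypothesis on $\theta$ whatsoever, and $\gcd(q-1,m_1)=1$ enters solely in the degenerate case $j_2=0$. Without the van der Corput step your outline does not close to a power saving.
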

In \cite{Fou}, Coquet, Rhin \& Toffin gave three sufficient conditions for the set
\[
|\{n\in\mathbb{N}: S_{q}(n)\equiv a_1\Mod{m_1},\ S_{\alpha}(n)\equiv a_2\Mod{m_2}\}
\]
to have asymptotic density equal to $1/(m_1m_2)$. One of these conditions is that the sequence $(q_k)$ be lacunary and $\gcd(a_k,m_2)$ be equal to one for infinitely many indices $k$. Note that this condition is satisfied for 
	\[
	\alpha=[0;\overline{1,m}]=\frac{-m+\sqrt{m^2+4m}}{2},\ m\geq 2
	\]
as 	
\[
\frac{q_{k+1}}{q_k}\geq\begin{cases}1+\frac{q_{k-1}}{q_k}\geq 1+\frac{1}{m+1} &\mbox{ if } a_{k+1}=1,\\
 m+\frac{q_{k-1}}{q_k}\geq m+\frac{1}{2} &\mbox{ if } a_{k+1}=m.
 \end{cases}
\]
For these values of $\alpha$, we improve the above asymptotic estimate to an estimate with error term $O(N^{1-\delta})$. Let $\norm{x}=\min\limits_{j\in\mathbb{Z}}|x-j|$. We prove
	\begin{theorem}\label{thm1}
	Let $q$ be an integer $\geq 2$ and let $$\alpha=[0;\overline{1,m}],\ m\geq 2.$$
	Let $\theta,\gamma\in\mathbb{R}$ with $\norm{m\gamma}\neq 0$. Then there exists $\delta>0$ such that
		\[
		\sum\limits_{n<N}e(\theta S_{q}(n)+\gamma S_{\alpha}(n))=O(N^{1-\delta}),
		\]
	where the $O$-constant depends only on $q$ and $m$.
	\end{theorem}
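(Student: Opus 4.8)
The plan is to follow the strategy of Spiegelhofer \cite{Spt} for the golden‑ratio case, the essential new feature being that $\alpha=[0;\overline{1,m}]$ has non‑constant partial quotients $a_{2j-1}=1$, $a_{2j}=m$. Since the sum is invariant under $\theta\mapsto\theta+1$ and $\gamma\mapsto\gamma+1$, we may assume $\theta,\gamma\in[0,1)$, so that $\norm{m\gamma}\neq 0$ reads $m\gamma\notin\mathbb{Z}$. Let $(q_k)_{k\ge 0}$ be the convergent denominators of $\alpha$; then $q_k\asymp\beta^k$ for some $\beta=\beta(m)>1$ and $q_k=a_kq_{k-1}+q_{k-2}$. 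Writing $N=\sum_{i\le L}c_iq_i$ in Ostrowski form and splitting $[0,N)$ according to the largest index at which $n$ and $N$ disagree, Theorem~\ref{Ostro} and the admissibility conditions make each resulting set of ``tails'' an interval $[0,q_j)$ with $j\le L$, on which $S_\alpha$ is additive because the $\alpha$‑representation carries no base‑$q$ carries. Hence, with
\[
F_k(x):=\sum_{0\le n<q_k}e\!\left(\theta S_q(x+n)+\gamma S_\alpha(n)\right),
\]
it suffices to prove $\max_{x\ge 0}|F_k(x)|=O(q_k^{1-\delta})$: summing this over the $O(L)$ blocks is a geometric series of total size $O(q_L^{1-\delta})=O(N^{1-\delta})$.

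The difficulty is that $S_q(x+n)$ does not split along a block decomposition of $n$, whereas $S_\alpha$ behaves perfectly. From
\[
[0,q_k)=\bigsqcup_{b=0}^{a_k-1}\bigl(bq_{k-1}+[0,q_{k-1})\bigr)\ \sqcup\ \bigl(a_kq_{k-1}+[0,q_{k-2})\bigr)
\]
and $S_\alpha(bq_{k-1}+r)=b+S_\alpha(r)$ one obtains, for every $x$, the functional recursion
\[
F_k(x)=\sum_{b=0}^{a_k-1}e(b\gamma)\,F_{k-1}(x+bq_{k-1})+e(a_k\gamma)\,F_{k-2}(x+a_kq_{k-1}).
\]
The trivial bound $\|F_k\|_\infty\le a_k\|F_{k-1}\|_\infty+\|F_{k-2}\|_\infty$ just reproduces $q_k$, so one must extract cancellation, and it only surfaces after iterating over a full period of the continued fraction (two steps, with $a\in\{1,m\}$): the sole genuine oscillation in the coefficients comes from the digit $b$, which over a period runs through $0,\dots,m-1$ and contributes a geometric factor $\sum_{b=0}^{m-1}e(b\gamma)$ whose modulus is $<m$ exactly when $m\gamma\notin\mathbb{Z}$. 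This is the source of the power saving, and it is here that $\norm{m\gamma}\neq 0$ is indispensable.

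To make this quantitative one must decouple the base‑$q$ weight from the Ostrowski structure. Fix a large $T$ and iterate the recursion through $T$ periods, writing $F_k(x)$ as a sum over admissible strings $s$ occupying the top $\asymp 2T$ digit positions, each with coefficient $e(\gamma S_\alpha(s))$ multiplying an inner sum $\sum_{r<q_{k-2T}}e(\theta S_q(x+v(s)q_{k-2T}+r)+\gamma S_\alpha(r))$. Now localise: with $j=\lceil\log_q q_{k-2T}\rceil$ and a buffer $\ell\asymp\epsilon_0 k$, split the base‑$q$ argument at position $j+\ell$; since $r<q^{j}$ there is at most one carry out of this block, so up to $O(1)$ alternatives and a scalar phase $e(\theta\kappa)$ the inner sum becomes $F_{k-2T}$ evaluated at a shift depending only on the low $j+\ell$ digits of $x+v(s)q_{k-2T}$, and the string dependence collects — up to the $O(q^{-\ell})$‑proportion of configurations in which a base‑$q$ carry chain is long, dealt with by a secondary digit expansion — into a twisted Ostrowski sum over the $\asymp q_k/q_{k-2T}$ strings $s$. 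One thereby arrives at an inequality
\[
\max_x|F_k(x)|\ \le\ C_T\,\max_x|F_{k-2T}(x)|\ +\ (\text{lower order}),
\]
with $C_T$ governed by that twisted string sum.

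Finally, $q_k/q_{k-2T}\to\beta^{2T}$, so $C_T$ is comparable to a value of the original exponential sum at a fixed large argument; by the uniform‑distribution theorem of Coquet, Rhin \& Toffin \cite{Co} (when $\theta$ or $\gamma$ is irrational) and an elementary equidistribution argument otherwise — the degenerate case in which $\theta S_q+\gamma S_\alpha$ is constant modulo $1$ being excluded at once — this sum is $o$ of its length, so $C_T<\beta^{2T}$ once $T$ is large. Iterating the displayed inequality in steps of $2T$ then yields $\max_x|F_k(x)|\le C'\rho^{k}$ with $\rho<\beta$, i.e.\ $O(q_k^{1-\delta})$, which reassembles to the claim. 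The main obstacle is exactly the decoupling step: one must show, uniformly in the shift $x$, that the base‑$q$ correction attached to each Ostrowski string cannot conspire with $\gamma$ to restore the full factor $m$ in $\sum_{b=0}^{m-1}e(b\gamma+\theta\phi_b)$, and that the exceptional sets produced by long base‑$q$ carry chains across $T$ periods are negligible — this carry book‑keeping, together with the use of $m\gamma\notin\mathbb{Z}$ (and not merely $\gamma\notin\mathbb{Z}$) in the geometric factor, is the technical heart of the argument.
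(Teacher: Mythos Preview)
Your route is entirely different from the paper's. The paper does not recurse on the Ostrowski length at all; it applies the van der Corput inequality (Lemma~\ref{corput}) to pass to correlations $\overline{g(n)}g(n+r)$, truncates both digit sums (Lemma~\ref{trunc}), and then decouples the two numeration systems by Fourier inversion: the base-$q$ factor via the ordinary DFT, and the Ostrowski factor via the characterisation $t_\alpha(n;k)=u\iff (-1)^kn\varphi\in R_k(u)+\mathbb Z$ (Corollary~\ref{criteria}) together with Vaaler's approximation (Lemma~\ref{prop1}). The cancellation is then extracted from the uniform bound $\bigl|q_k^{-1}\sum_{u<q_k}e(\gamma S_\alpha(u)+\beta u)\bigr|\le Ce^{-k\eta}$ of Lemma~\ref{matrix}, which holds for \emph{every} linear twist $\beta$; this uniformity is precisely what separates the two systems.

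Your proposal has a genuine gap at the step you yourself flag as ``the main obstacle''. Iterating the recursion $2T$ times gives $F_k(x)=\sum_{s}e(\gamma S_\alpha(s))\,F_{k-2T}\!\bigl(x+v(s)q_{k-2T}\bigr)$, and since $T$ is fixed while your buffer satisfies $q^{\ell}\gg\beta^{2T}$, the residues $v(s)q_{k-2T}\bmod q^{j+\ell}$ are pairwise distinct: the inner terms are evaluated at genuinely different arguments, so no factorisation $C_T\cdot\max_y|F_{k-2T}(y)|$ with $C_T<\beta^{2T}$ follows from localisation and carry book-keeping alone. What remains after your decoupling is a sum of the shape $\sum_s e(\gamma S_\alpha(s)+\theta\psi(s,x))$ with a phase $\psi$ that is neither $S_q$ of an integer attached to $s$ nor linear in $s$, so the Coquet--Rhin--Toffin theorem does not apply, and in any case that theorem gives only $o(M)$ with no uniformity in an external parameter $x$. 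To close the argument you would need exactly an analogue of Lemma~\ref{matrix}: a bound on $\sum_s e(\gamma S_\alpha(s)+\text{twist}(s))$ that is uniform over the family of twists your recursion produces. You have not supplied such a bound, and nothing in the proposal indicates how to obtain one without reverting to the Fourier/van der Corput machinery the paper uses.
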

As a consequence, we obtain
	\begin{corollary}\label{cor1}
	Let $q$ and $\alpha$ be as in Theorem \ref{thm1} and let $m_1,m_2$ be integers $\geq 2$ with $\gcd(q-1,m_1)=\gcd(m,m_2)=1$. There exists $\delta>0$ such that for all integers $a_1,a_2$,
		\begin{align*}
		&|\{0\leq n<N: S_{q}(n)\equiv a_1\Mod{m_1},\ S_{\alpha}(n)\equiv a_2\Mod{m_2}\}|\\
		&=\frac{N}{m_1m_2}+O(N^{1-\delta}).
		\end{align*}
	\end{corollary}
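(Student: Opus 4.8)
We outline how Corollary \ref{cor1} is deduced from Theorem \ref{thm1} by the standard Fourier-analytic reduction of a congruence count to exponential sums. Write
\[
A=\bigl|\{0\le n<N:\ S_q(n)\equiv a_1\Mod{m_1},\ S_\alpha(n)\equiv a_2\Mod{m_2}\}\bigr|
\]
for the quantity to be estimated. By orthogonality of the additive characters of $\mathbb{Z}/m_1\mathbb{Z}$, for every integer $n$,
\[
\mathbf{1}\bigl[S_q(n)\equiv a_1\Mod{m_1}\bigr]=\frac{1}{m_1}\sum_{j_1=0}^{m_1-1}e\!\left(\frac{j_1(S_q(n)-a_1)}{m_1}\right),
\]
and similarly with $S_\alpha,a_2,m_2,j_2$ in place of $S_q,a_1,m_1,j_1$. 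Multiplying these, summing over $0\le n<N$, and interchanging the order of summation gives
\[
A=\frac{1}{m_1m_2}\sum_{j_1=0}^{m_1-1}\sum_{j_2=0}^{m_2-1}c_{j_1,j_2}\,\Sigma(j_1,j_2),\qquad \Sigma(j_1,j_2)=\sum_{n<N}e\!\left(\frac{j_1}{m_1}S_q(n)+\frac{j_2}{m_2}S_\alpha(n)\right),
\]
where $|c_{j_1,j_2}|=1$. The term $(j_1,j_2)=(0,0)$ contributes exactly $N/(m_1m_2)$, so it remains to show $\Sigma(j_1,j_2)=O(N^{1-\delta})$ for all $(j_1,j_2)\ne(0,0)$.

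For $j_2\ne 0$ I would apply Theorem \ref{thm1} with $\theta=j_1/m_1$ and $\gamma=j_2/m_2$ (the value $\theta=0$ being allowed). The hypothesis to verify is $\norm{m\gamma}\ne 0$, i.e.\ $m_2\nmid mj_2$, and this is precisely where $\gcd(m,m_2)=1$ is used: since $1\le j_2\le m_2-1$ we have $m_2\nmid j_2$, hence $m_2\nmid mj_2$. Thus $\Sigma(j_1,j_2)=O(N^{1-\delta_{j_1,j_2}})$ for some $\delta_{j_1,j_2}>0$. For the remaining pairs $(j_1,0)$ with $1\le j_1\le m_1-1$ we have $\gamma=0$, so Theorem \ref{thm1} does not apply; here $\Sigma(j_1,0)=\sum_{n<N}e(j_1S_q(n)/m_1)$ is a purely base-$q$ digital sum, and the bound $O(N^{1-\delta_{j_1}})$ is classical---it is contained in Gel'fond's theorem \cite{Ge}, where $\gcd(q-1,m_1)=1$ enters, and can also be obtained directly from the identity $\sum_{n<q^k}e(\theta S_q(n))=(\sum_{0\le d<q}e(\theta d))^k$ together with a base-$q$ splitting of $N$, using that $|\sum_{0\le d<q}e(j_1d/m_1)|<q$ for $0<j_1<m_1$. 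Taking $\delta$ to be the smallest of the finitely many exponents obtained, the total contribution of the pairs $(j_1,j_2)\ne(0,0)$ is $O(N^{1-\delta})$, which gives the corollary.

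I do not expect a genuine obstacle here: this is the routine passage from exponential-sum bounds to equidistribution in residue classes, and all the analytic content sits in Theorem \ref{thm1}. The only points that require a little care are checking that the Diophantine condition $\norm{m\gamma}\ne 0$ of Theorem \ref{thm1} is a consequence of $\gcd(m,m_2)=1$, and noticing that the terms with $\gamma=0$ lie outside the scope of Theorem \ref{thm1} and must instead be controlled by the classical base-$q$ digital-sum estimate.
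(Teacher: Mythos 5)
Your proposal is correct and follows essentially the same route as the paper: orthogonality of additive characters to isolate the main term $N/(m_1m_2)$, Theorem \ref{thm1} for the exponential sums with $j_2\neq 0$ (where $\gcd(m,m_2)=1$ gives $\norm{mj_2/m_2}\neq 0$), and Gel$'$fond's classical estimate for the terms with $j_2=0$, $j_1\neq 0$. Nothing to add.
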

The proof relies on Weyl and van der Corput's method. 
In Section \ref{pre}, we introduce some notation and record some preliminary lemmas. Following \cite{Spt}, we obtain a characterization of integers with the same initial digits in their Ostrowski $\alpha$-representations in Section \ref{lem_sec}. We then use it to obtain an analogue of inverse discrete Fourier transform in this case and also derive a uniform upper bound for the \textit{Fourier coefficients}. Finally, we prove Theorem \ref{thm1} and Corollary \ref{cor1} in Section \ref{last}.
 \section{Preliminaries}\label{pre}
 Let $\ceil{x}$ denote the smallest integer greater than or equal to $x$ and $\{x\}$ denote the fractional part of $x$.\\
 Since
	\[
	\alpha=[0;\overline{1,m}],
	\]
we have $q_0=q_1=1$ and
	\[
	q_n=\begin{cases}
		mq_{n-1}+q_{n-2} &\mbox{ if } n \textrm{ is even} \\
		q_{n-1}+q_{n-2} &\mbox{ if } n \textrm{ is odd}.
		\end{cases}
	\]
Let $d=m^2+4m$ and
	\be\label{phin}
	\varphi=\frac{m+2+\sqrt{d}}{2}.
	\ee
Then
	\begin{align}\label{qiLehmer}
		q_{2\ell}&=\frac{m+\sqrt{d}}{2\sqrt{d}}\varphi^{\ell}-\frac{m-\sqrt{d}}{2\sqrt{d}}\varphi^{-\ell},\\
\nonumber q_{2\ell+1}&=\frac{1}{\sqrt{d}}\varphi^{\ell+1}-\frac{1}{\sqrt{d}}\varphi^{-\ell-1}.
	\end{align}
By Theorem \ref{Ostro}, the digits in the $\alpha$-representation \eqref{nLehmer} of a positive integer $n$ satisfy
	\[
	b_{2\ell}(n)\leq 1\textrm{ and } b_{2\ell+1}(n)\leq m
	\]
 for all non-negative integers $\ell$. Given an integer $k\geq 1$,
let $t(n;k)$ denote the truncation of the sum in \eqref{nLehmer} after $k$ digits, i.e.
	\[
	t_{\alpha}(n;k)=\sum\limits_{0\leq i\leq k-1} b_i(n)q_i
	\]
and let $S_{\alpha,k}(n)$ denote the sum of digits up to $k$, i.e.
	\[
	S_{\alpha,k}(n)=\sum\limits_{0\leq i\leq k-1} b_i(n).
	\]
Next, let
 \[
 S_{q,t}(n)=S_q(n (\textrm{ mod } q^t)).
 \]
 Let $G_t(\ell)=G_t(\ell,\theta)$ denote the discrete Fourier coefficients of the function $e(\theta S_{q}(n))$, i. e.
	\[
	G_t(\ell,\theta)=\frac{1}{q^{t}}\sum\limits_{u<q^t}e(\theta S_q(u)-\ell u q^{-t}).
	\]
Then
	\begin{align}
	\label{dft1} e(\theta S_{q,t}(n))&=\sum\limits_{\ell<q^t} e(\ell n q^{-t})G_t(\ell,\theta),\\
	\label{dft2} e(-\theta S_{q,t}(n))&=\sum\limits_{\ell<q^t} e(\ell n q^{-t})\overline{G_t(-\ell,\theta)}.
	\end{align}
Note that, by Parseval's identity,
	\be\label{Parseval}
	\sum\limits_{\ell<q^t}|G_t(\ell)|^2=1.
	\ee
For negative integers $n$, we define
\[
S_q(n)=S_{\alpha}(n)=0.
\]
We now list some results needed in the proof. The following is an elementary lemma  on exponential sums.
\begin{lemma}\label{elem}
Let $x\in\mathbb{R}$ and $N,R\geq 0$. Then
\begin{enumerate}[(i)]
\item \cite[Lemma 1]{Kor}\	\[
	\Big|\sum\limits_{n<N}e(nx)\Big|\leq \min\left(N,\frac{1}{2\norm{x}}\right).
	\]
\item \[
	\sum\limits_{|r|<R}(R-|r|)e(rx)=\big|\sum\limits_{r<R}e(rx)\big|^2.
	\]
\end{enumerate}
\end{lemma}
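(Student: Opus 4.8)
The plan is to treat the two parts separately; both are classical, and the only genuinely analytic input is an elementary lower bound for $|\sin|$.

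For part $(i)$, the bound by $N$ is immediate from the triangle inequality, since $|e(nx)|=1$ for each summand. For the bound by $1/(2\norm{x})$, note first that if $x\in\mathbb{Z}$ then $\norm{x}=0$ and there is nothing to prove, so assume $x\notin\mathbb{Z}$. Then the sum is a finite geometric progression and
\[
\Big|\sum_{n<N}e(nx)\Big|=\left|\frac{e(Nx)-1}{e(x)-1}\right|=\frac{|\sin\pi Nx|}{|\sin\pi x|}\le\frac{1}{|\sin\pi x|}=\frac{1}{\sin\pi\norm{x}}.
\]
Since $t\mapsto\sin\pi t$ is concave on $[0,1/2]$ (its second derivative is $-\pi^2\sin\pi t\le 0$ there) and vanishes at $0$ while equalling $1$ at $1/2$, it lies above the chord $t\mapsto 2t$, i.e. $\sin\pi t\ge 2t$ on $[0,1/2]$; as $\norm{x}\in[0,1/2]$, the right-hand side above is at most $1/(2\norm{x})$. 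This is precisely the argument of \cite[Lemma 1]{Kor}.

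For part $(ii)$, I would expand the square as a double sum and regroup by the difference of the indices. Writing $\nu=r-s$,
\[
\Big|\sum_{r<R}e(rx)\Big|^2=\sum_{0\le r,s<R}e\big((r-s)x\big)=\sum_{|\nu|<R}N_\nu\,e(\nu x),
\]
where $N_\nu$ denotes the number of ordered pairs $(r,s)$ of integers in $[0,R)$ with $r-s=\nu$. A direct count (for $\nu\ge 0$ the valid values of $s$ are $0,1,\dots,R-\nu-1$, and symmetrically for $\nu<0$) gives $N_\nu=R-|\nu|$ for every integer $\nu$ with $|\nu|<R$, which is exactly the claimed identity.

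There is no substantive obstacle in either part: $(i)$ reduces to summing a geometric progression together with the inequality $\sin\pi t\ge 2t$ on $[0,1/2]$, and $(ii)$ is a bookkeeping exercise counting lattice points on the diagonals $r-s=\nu$ of the square $[0,R)^2$. The real role of the lemma is downstream — part $(i)$ will be used to control the linear exponential sums produced by the base-$q$ Fourier expansions \eqref{dft1}--\eqref{dft2}, and part $(ii)$, Fej\'er's kernel identity, will be used to recast the squared sums that arise after applying van der Corput's inequality.
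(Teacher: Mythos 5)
Your proof is correct and is exactly the standard argument the paper relies on: the paper gives no proof of this lemma, citing Korobov for part $(i)$ (whose proof is the geometric-series computation together with $\sin\pi t\ge 2t$ on $[0,1/2]$ that you reproduce) and treating part $(ii)$ as the elementary Fej\'er-kernel identity you verify by counting diagonals. Nothing further is needed.
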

We now record an estimate from \cite{Spt}, which is proved using discrepancy estimate for the sequence $(n\varphi)$, where $\varphi$ has bounded partial quotients. We use it for $\varphi$ as given in \eqref{phin}.
  \begin{lemma}\label{discrep}\cite[Lemma 5.8]{Spt}
  Let $I$ be a finite interval in $\mathbb{Z}$. Let $K$ and $a$ be real numbers with $K\geq 1$. Then
  	\[
  	\sum\limits_{h\in I}\min\left(K,\frac{1}{\norm{a+h\varphi}^2}\right)\ll \sqrt{K}\lambda(I)+K\ln\lambda(I).
  	\]
  	(Here, $\lambda$ denotes the Lebesgue measure on $\mathbb{R}$.)
   \end{lemma}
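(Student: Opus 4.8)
The plan is to prove the bound on the sum
\[
\sum_{h\in I}\min\left(K,\frac{1}{\norm{a+h\varphi}^2}\right)
\]
by reducing it to a discrepancy estimate for the sequence $(h\varphi)_{h}$ modulo $1$. The crucial structural fact is that $\varphi$ from \eqref{phin} is a quadratic irrational and hence has an eventually periodic continued fraction expansion, so its partial quotients are bounded. By the three-distance theorem (or directly by the theory of continued fractions, Koksma-type inequalities), a sequence $(h\varphi)$ with $\varphi$ of bounded partial quotients is low-discrepancy: the number of points $h\varphi \bmod 1$ falling in any interval $J\subseteq[0,1)$ of length $|J|$ among $h$ ranging over an interval $I$ of length $\lambda(I)$ is
\[
\lambda(I)\,|J| + O(\ln \lambda(I)),
\]
where the implied constant depends only on the bound for the partial quotients of $\varphi$. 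This quasi-equidistribution with only a logarithmic error is exactly what turns a sum of a rapidly decaying function of $\norm{a+h\varphi}$ into the claimed estimate.

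First I would fix a scale by partitioning the range of $\norm{a+h\varphi}$ dyadically. The summand $\min(K,1/\norm{a+h\varphi}^2)$ equals $K$ precisely when $\norm{a+h\varphi}\le K^{-1/2}$, and otherwise is bounded by $\norm{a+h\varphi}^{-2}$. So I would split $I$ according to the size of $\norm{a+h\varphi}$ into the ``near'' set where $\norm{a+h\varphi}\le K^{-1/2}$ and a collection of dyadic shells
\[
S_j=\{h\in I: 2^{-j-1}<\norm{a+h\varphi}\le 2^{-j}\},\qquad K^{-1/2}<2^{-j}\le \tfrac12 .
\]
The point $h$ lies in the near set exactly when $a+h\varphi$ lands in an interval of length $2K^{-1/2}$ modulo $1$, and $h\in S_j$ exactly when $a+h\varphi$ lands in a set of measure $2^{-j}$ modulo $1$ (a union of two short intervals around an integer). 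By the discrepancy bound above, the near set has size at most $2K^{-1/2}\lambda(I)+O(\ln\lambda(I))$, and each shell $S_j$ has size at most $2^{-j+1}\lambda(I)+O(\ln\lambda(I))$.

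Next I would assemble the contributions. The near set contributes at most
\[
K\bigl(2K^{-1/2}\lambda(I)+O(\ln\lambda(I))\bigr)\ll \sqrt{K}\,\lambda(I)+K\ln\lambda(I).
\]
Each shell $S_j$ contributes at most $(2^{-j-1})^{-2}=2^{2j+2}$ per point, so its total contribution is at most
\[
2^{2j+2}\bigl(2^{-j+1}\lambda(I)+O(\ln\lambda(I))\bigr)\ll 2^{j}\lambda(I)+2^{2j}\ln\lambda(I).
\]
Summing the geometric term $2^j\lambda(I)$ over the $O(\log K)$ relevant values of $j$ (those with $2^{-j}>K^{-1/2}$) is dominated by its largest term, where $2^{j}\le \sqrt K$, giving $\ll \sqrt K\,\lambda(I)$; likewise $\sum 2^{2j}\ln\lambda(I)\ll K\ln\lambda(I)$ since the largest shell has $2^{2j}\ll K$. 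Combining the near set and the shells yields exactly $\sqrt K\,\lambda(I)+K\ln\lambda(I)$.

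The main obstacle, and the step deserving the most care, is making the discrepancy input uniform and valid in the intended regime. Two subtleties arise: first, the logarithmic error term $O(\ln\lambda(I))$ must be independent of the shift $a$ and of which interval $J$ we count into, which is precisely the content of a Koksma--Erd\H{o}s--Tur\'an or three-distance argument for $\varphi$ with bounded partial quotients — here I would invoke the bounded-partial-quotient property of the quadratic irrational $\varphi$ to get the constant depending only on $m$. Second, when $\lambda(I)$ is small relative to $\sqrt K$, the counts from the discrepancy estimate could formally be dominated by their error terms; one must check that the trivial bound (each of the $\lambda(I)$ terms is at most $K$) combined with the discrepancy bound still delivers the stated form, which it does because $\ln\lambda(I)\ge 0$ keeps the second summand in control and the first summand $\sqrt K\lambda(I)$ covers the remaining mass. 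Handling these uniformities cleanly is the heart of the proof; once the discrepancy estimate is pinned down with a constant depending only on $\varphi$, the dyadic assembly is routine.
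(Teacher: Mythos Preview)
The paper does not prove this lemma; it is quoted from \cite[Lemma~5.8]{Spt} with only the remark that it ``is proved using discrepancy estimate for the sequence $(n\varphi)$, where $\varphi$ has bounded partial quotients.'' Your argument is exactly of this type---the bounded partial quotients of the quadratic irrational $\varphi$ give discrepancy $O((\ln N)/N)$ for $(h\varphi)_h$, and a dyadic decomposition in $\norm{a+h\varphi}$ converts the resulting point counts into the stated bound---so your proposal matches the indicated approach and is correct in the regime where the lemma is applied.

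One minor remark: your closing claim that ``$\ln\lambda(I)\ge 0$ keeps the second summand in control'' tacitly assumes $\lambda(I)\ge 1$, and indeed for a one-point interval the stated right-hand side $\sqrt K\,\lambda(I)+K\ln\lambda(I)$ does not dominate the trivial bound $K$. This is a quirk of the lemma's formulation rather than a flaw in your method; in every application in the paper the interval has length $q_k$ or $H$, so the boundary case never arises, but if you want a clean standalone statement you should either add the hypothesis $\lambda(I)\ge 2$ or tack on a $+K$ on the right.
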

Next, we state the version of the Weyl-van der Corput inequality that will be used later.
  \begin{lemma}\label{corput}\cite[Lemma 2.5]{Gr}
  Let $z_0,\ldots,z_{N-1}$ be complex numbers. For all positive integers $R$, we have
  	\[
 	\Big|\sum\limits_{n=0}^{N-1}z_n\Big|^2\leq\frac{N+R-1}{R}\sum\limits_{|r|<R}\Big(1-\frac{|r|}{R}\Big)\Big|\sum\limits_{\substack{0\leq n< N\\0\leq n+r< N}}\overline{z}_nz_{n+r}\Big|.
 	\]
 	\end{lemma}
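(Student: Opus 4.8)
The plan is to prove this van der Corput inequality by the standard ``sliding window'' smoothing argument. First I would extend the data by setting $z_n = 0$ for every integer $n \notin \{0,1,\ldots,N-1\}$ and write $S = \sum_{n=0}^{N-1} z_n = \sum_{n\in\mathbb{Z}} z_n$. The key observation is that averaging a length-$R$ window recovers $RS$: for each fixed shift $j$ the translated sum $\sum_{n\in\mathbb{Z}} z_{n+j}$ is merely a reindexing of the full (finitely supported) sum and hence equals $S$, so
\[
RS = \sum_{j=0}^{R-1}\sum_{n\in\mathbb{Z}} z_{n+j} = \sum_{n\in\mathbb{Z}}w_n, \qquad w_n := \sum_{j=0}^{R-1} z_{n+j}.
\]
The point of this rearrangement is that the window $w_n$ is nonzero only when some $n+j$ lies in $\{0,\ldots,N-1\}$, that is, only for $n \in \{-(R-1),\ldots,N-1\}$, a set of exactly $N+R-1$ integers.

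Next I would apply the Cauchy--Schwarz inequality to $RS = \sum_n w_n$, using that at most $N+R-1$ of the $w_n$ are nonzero:
\[
R^2|S|^2 = \Big|\sum_{n\in\mathbb{Z}} w_n\Big|^2 \le (N+R-1)\sum_{n\in\mathbb{Z}}|w_n|^2.
\]
It then remains to evaluate $\sum_n |w_n|^2$. Expanding the square gives $\sum_n\sum_{j,k=0}^{R-1} z_{n+j}\overline{z_{n+k}}$; writing $r = j-k$ and, for each admissible pair, shifting the summation index by $p=n+k$ so that $z_{n+j}\overline{z_{n+k}} = \overline{z_p}\,z_{p+r}$, one sees that the index pair $(j,k)$ with $j-k=r$ occurs exactly $R-|r|$ times as $j,k$ range over $\{0,\ldots,R-1\}$. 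Hence
\[
\sum_{n\in\mathbb{Z}}|w_n|^2 = \sum_{|r|<R}(R-|r|)\,C_r, \qquad C_r := \sum_{n\in\mathbb{Z}}\overline{z}_n z_{n+r},
\]
where, upon restoring the support of $z$, the inner sum $C_r$ is precisely the sum over $0\le n<N$ and $0\le n+r<N$ appearing in the statement.

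Finally I would assemble the pieces: dividing $R^2|S|^2 \le (N+R-1)\sum_{|r|<R}(R-|r|)C_r$ by $R^2$ and using $(R-|r|)/R^2 = R^{-1}(1-|r|/R)$ yields
\[
|S|^2 \le \frac{N+R-1}{R}\sum_{|r|<R}\Big(1-\tfrac{|r|}{R}\Big)C_r.
\]
Both the left-hand side and the entire right-hand side are real and nonnegative (the latter equals $R^{-1}\sum_n|w_n|^2$), so the displayed sum equals its own modulus; the triangle inequality then replaces each $C_r$ by $|C_r|$, producing the absolute values as stated. The argument involves no analytic difficulty; the only real care is the combinatorial bookkeeping, namely getting the window count $N+R-1$ and the multiplicity $R-|r|$ exactly right so that the constant $\frac{N+R-1}{R}$ and the Fej\'er-type weight $1-|r|/R$ emerge as claimed, together with the concluding observation that the nonnegativity of the full sum legitimises inserting the modulus around each inner correlation $C_r$.
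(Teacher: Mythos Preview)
Your argument is correct and is precisely the standard sliding-window/Cauchy--Schwarz proof of the Weyl--van der Corput inequality; the paper does not supply its own proof but simply cites \cite[Lemma 2.5]{Gr}, and what you have written is essentially the argument found there.
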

 The following lemma states that for \textit{most} integers $n$, the representations of $n$ and $n+r$ may
 differ at digits corresponding to the first few base elements only.
\begin{lemma}\label{trunc}
 Let $N,r,k,t$ be non-negative integers with $k\geq 2$ and let $\theta,\gamma$ be real numbers. Then
 \begin{align*}
 	(i)\ &|\{n<N:e(\theta S_{q}(n+r))\overline{e(\theta S_{q}(n))}\neq e(\theta S_{q,t}(n+r))\overline{e(\theta S_{q,t}(n))}\}|\\
 	&\leq\frac{Nr}{q^t}+r.
 	\end{align*}
 	
\begin{align*}
 	(ii)\ &|\{n<N:e(\gamma S_{\alpha}(n+r))\overline{e(\gamma S_{\alpha}(n))}\neq e(\gamma S_{\alpha,k}(n+r))\overline{e(\gamma S_{\alpha,k}(n))}\}|\\
 	&\leq\frac{Nr}{q_{k-1}}.
 	\end{align*}
 \end{lemma}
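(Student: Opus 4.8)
The plan is to prove both parts by isolating an additive \emph{splitting} of the relevant digital sum at a fixed position, and then counting how often a carry (in the base-$q$ case), respectively a change in the tail digits (in the Ostrowski case), crosses that position.

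For (i), I would use that $S_q$ splits additively at the $t$-th position: writing $n=(n\bmod q^t)+q^t\floor{n/q^t}$ we get $S_q(n)=S_{q,t}(n)+S_q(\floor{n/q^t})$, and likewise for $n+r$, so that
\[
e(\theta S_q(n+r))\overline{e(\theta S_q(n))}=e(\theta S_{q,t}(n+r))\overline{e(\theta S_{q,t}(n))}\cdot e\bigl(\theta(S_q(\floor{(n+r)/q^t})-S_q(\floor{n/q^t}))\bigr).
\]
Hence the two products can differ only when $\floor{(n+r)/q^t}\neq\floor{n/q^t}$, i.e.\ when a carry reaches position $t$. If $r\le q^t$ this forces $n\bmod q^t\in\{q^t-r,\dots,q^t-1\}$, a block of $r$ residues modulo $q^t$, and among $n<N$ there are at most $\floor{N/q^t}r+r\le Nr/q^t+r$ such $n$; if $r>q^t$ the asserted bound already exceeds $N$. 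This part is routine.

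For (ii), I would first set up the Ostrowski analogue of the splitting. If $(b_i(n))_{i\ge0}$ is the $\alpha$-representation of $n$ and $k\ge1$, then the string $(0,\dots,0,b_k(n),b_{k+1}(n),\dots)$ (zeros in positions $0,\dots,k-1$) again satisfies the admissibility conditions of Theorem~\ref{Ostro}; the only condition that could fail at the junction is $(iii)$ at $i=k$, and it holds because the digit in position $k-1$ has now been set to $0$. Thus this is the $\alpha$-representation of $h_k(n):=n-t_\alpha(n;k)=\sum_{i\ge k}b_i(n)q_i$, so $S_\alpha(n)=S_{\alpha,k}(n)+S_\alpha(h_k(n))$, and exactly as in (i) the two products in (ii) differ only when $h_k(n+r)\neq h_k(n)$. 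It then remains to show that $\{n:h_k(n)=H\}$ is an interval of length at least $q_{k-1}$ for each admissible value $H$, and that these intervals partition $\mathbb{Z}_{\ge0}$; granting this, and assuming $r\le q_{k-1}$ (the case $r>q_{k-1}$ being trivial), a ``bad'' $n$ must occupy one of the top $r$ positions of its interval, and a routine count over the intervals meeting $[0,N)$ yields the stated estimate.

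The main work, and the main obstacle, is the interval claim in the previous paragraph. Unlike the base-$q$ situation, the low blocks $t_\alpha(n;k)$ compatible with a fixed tail $h_k(n)=H$ are \emph{not} all of $\{0,\dots,q_k-1\}$: condition $(iii)$ couples the digit in position $k-1$ to the digit of $H$ in position $k$. Concretely, if that $k$-th digit is strictly less than $a_{k+1}$ then every admissible $(b_0,\dots,b_{k-1})$ occurs and the level set is an interval of length $q_k$, whereas if it equals $a_{k+1}$ then $b_{k-1}$ is forced to vanish and the interval has length only $q_{k-1}$, the number of admissible $(b_0,\dots,b_{k-2})$ (these run through $0,\dots,q_{k-1}-1$). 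This dichotomy---which is exactly why $q_{k-1}$ rather than $q_k$ is the correct scale in (ii)---is read off from the characterization of integers with equal initial Ostrowski digits developed in Section~\ref{lem_sec}; the technical heart is to verify that the concatenation of an admissible low block with the tail of $H$ is always the $\alpha$-representation of the resulting integer, and that the resulting intervals tile $\mathbb{Z}_{\ge0}$.
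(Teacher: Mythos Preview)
Your argument is correct in both parts, and in fact you are supplying more than the paper does: the paper's own ``proof'' of this lemma consists entirely of two citations, to \cite[Lemma~1.17]{Spt} for~(i) and to \cite[Lemma~2.6]{Spp} for~(ii). Your carry-counting argument for~(i) is the standard one. For~(ii), your splitting $S_\alpha(n)=S_{\alpha,k}(n)+S_\alpha(h_k(n))$ and the dichotomy on the length of the level sets $\{n:h_k(n)=H\}$ (length $q_k$ when the $k$-th digit of $H$ is below $a_{k+1}$, length $q_{k-1}$ when it equals $a_{k+1}$) are exactly right, and the counting does give the stated bound without an additive $+r$: if $0=E_0<E_1<\cdots$ are the left endpoints of these intervals and $E_M\le N<E_{M+1}$, then $E_j\ge jq_{k-1}$ for all $j$, and a short computation shows that in both cases ($E_{M+1}-r\ge N$ or not) the number of bad $n<N$ is at most $Nr/q_{k-1}$.

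One small remark: you say the interval dichotomy is ``read off from the characterization \dots\ developed in Section~\ref{lem_sec}''. That is not needed, and in fact your own paragraph already contains the complete argument---checking that the concatenation of an admissible low block $(b_0,\dots,b_{k-1})$ with the tail $(c_k,c_{k+1},\dots)$ of $H$ is again admissible, the only nontrivial check being condition~(iii) at position $i=k$. Section~\ref{lem_sec} gives a \emph{different} characterization of $t_\alpha(n;k)$ in terms of the fractional parts $\{(-1)^kn\varphi\}$, which is used later for the Fourier-analytic part of the proof of Theorem~\ref{thm1} but plays no role here. Your combinatorial argument is self-contained.
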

 \begin{proof}
 See \cite[Lemma 1.17]{Spt} for a proof of $(i)$ and \cite[Lemma 2.6]{Spp} for a proof of $(ii)$.
 \end{proof}
 \section{Lemmas}\label{lem_sec}
 We first derive a characterization (Corollary \ref{criteria}) of integers $n$ with the same value of $t_{\alpha}(n;k)$, for a given $k$. Later, we use this to obtain \textit{discrete Fourier transform} for the function $e(\gamma S_{\alpha,k}(n))$. This is analogous to \cite[Proposition 5.7 \& Proposition 5.4]{Spt}.
\begin{lemma}\label{crit1}
Let $k\geq 2$ be an integer. Let $\pPell(n)=(-1)^kn\varphi$, where $\varphi$ is as in \eqref{phin}. Define
	\begin{align*}
	A_k^{(1)}&=\begin{cases}
				\left[\frac{m-\sqrt{d}}{2\varphi^{k_0}},\frac{1}{\varphi^{k_0}}\right) &\mbox{ if } k=2k_0,\ k_0\in\mathbb{N},\\[10pt]
				\left[\frac{-1}{\varphi^{k_0+1}}, \frac{-m+\sqrt{d}}{2\varphi^{k_0}}\right) &\mbox{ if } k=2k_0+1,\ k_0\in\mathbb{N}
			    \end{cases}\\
	A_k^{(2)}&=\begin{cases}
				\left[\frac{m-\sqrt{d}}{2\varphi^{k_0}},\frac{1}{\varphi^{k_0+1}}\right) &\mbox{ if } k=2k_0,\ k_0\in\mathbb{N},\\[10pt]
				\left[\frac{-1}{\varphi^{k_0+1}}, \frac{-m-1+\sqrt{d}}{\varphi^{k_0}}\right) &\mbox{ if } k=2k_0+1,\ k_0\in\mathbb{N}
			    \end{cases}\\
	\end{align*}
and
	\[
	R_k(u)=\pPell(u)+\begin{cases}
					A_k^{(1)} &\mbox{ if } 0\leq u<q_{k-1}\\
					A_k^{(2)} &\mbox{ if } q_{k-1}\leq u<q_{k}.
				 \end{cases}
	\]
Then
	\[
	\pPell(n)\in R_k(t_{\alpha}(n;k))+\mathbb{Z}.
	\]
\end{lemma}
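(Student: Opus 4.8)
The plan is to exploit the structure of the Ostrowski representation directly. Write $n = t_\alpha(n;k) + \sum_{i\ge k} b_i(n)q_i$, so $n - t_\alpha(n;k) = \sum_{i\ge k} b_i(n)q_i =: \tau$. Since $p_k(n) = (-1)^k n\varphi$ is linear in $n$, we have $p_k(n) - p_k(t_\alpha(n;k)) = (-1)^k \tau\varphi$, so the assertion $p_k(n) \in R_k(t_\alpha(n;k)) + \mathbb Z$ is equivalent to showing that $(-1)^k\varphi\sum_{i\ge k} b_i(n)q_i$ lies (mod $\mathbb Z$) in the interval $A_k^{(1)}$ or $A_k^{(2)}$ according to whether $t_\alpha(n;k) < q_{k-1}$ or $t_\alpha(n;k)\ge q_{k-1}$. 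Note that $t_\alpha(n;k) < q_{k-1}$ versus $q_{k-1}\le t_\alpha(n;k) < q_k$ is exactly the dichotomy ``$b_{k-1}(n)=0$'' versus ``$b_{k-1}(n)\ge 1$'' when $k$ is such that $a_k$ could equal that digit; more precisely one tracks whether the admissibility condition (iii) forces $b_k(n)$ to be restricted.

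The key computational input is the identity for $q_i\varphi \pmod{\mathbb Z}$. From \eqref{qiLehmer} and \eqref{phin}, one computes that $q_{2\ell}\varphi$ and $q_{2\ell+1}\varphi$ differ from rational integers by controlled amounts: writing $\varphi' = (m+2-\sqrt d)/2$ for the conjugate, $q_i$ is an integer combination of $\varphi^{\lceil i/2\rceil}$ and $\varphi'^{\lceil i/2\rceil}$, so $q_i\varphi$ is an integer plus a small term of size $O(\varphi'^{\lceil i/2\rceil})$, and $\|q_i\varphi\|$ decays geometrically in $i$. So first I would establish, for each parity of $i$, an exact formula of the form $q_i\varphi = (\text{integer}) + c_i\varphi^{-\lceil i/2\rceil}$ with explicit $c_i\in\{\pm\frac{m-\sqrt d}{2\sqrt d}\varphi,\ldots\}$ obtained by multiplying \eqref{qiLehmer} by $\varphi$ and using $\varphi\varphi' = -m-1$ (the constant term of the minimal polynomial $x^2-(m+2)x-(m+1)$... actually $\varphi$ satisfies $x^2 = (m+2)x - (m+1)$? — one should recompute: $d = m^2+4m$, and $\varphi = \frac{m+2+\sqrt d}{2}$ satisfies $\varphi^2 - (m+2)\varphi + \frac{(m+2)^2 - d}{4} = \varphi^2 - (m+2)\varphi + (m+1) = 0$). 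With this, $\varphi' = m+1$ over $\varphi$... the arithmetic is routine but must be done carefully to pin down the exact endpoints of $A_k^{(1)}, A_k^{(2)}$.

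Next I would sum the tail: $(-1)^k\varphi\sum_{i\ge k}b_i(n)q_i \equiv \sum_{i\ge k}b_i(n)\,(-1)^k c_i\varphi^{-\lceil i/2\rceil}\pmod{\mathbb Z}$, and bound this sum using the digit bounds $b_{2\ell}\le 1$, $b_{2\ell+1}\le m$ together with the admissibility condition (iii), which crucially rules out the worst-case all-maximal tail and gives a convergent geometric series whose value is trapped between the stated endpoints. This is where the two cases $0\le u<q_{k-1}$ and $q_{k-1}\le u<q_k$ enter: in the second case $b_{k-1}(n)\ge 1$, so by (iii) the digit $b_k(n)$ is constrained (it cannot equal $a_{k+1}$), which shrinks the range of possible tails and yields the narrower interval $A_k^{(2)}$; in the first case no such constraint is active and we get $A_k^{(1)}$. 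The extremal tails — all digits as large as admissibility permits, versus all digits zero — give the closed-open endpoints, and one checks the endpoint inclusions match the half-open intervals as written (the representation being unique means the supremum tail is not attained, accounting for the open right endpoint, while the zero tail is attained, accounting for the closed left endpoint, up to sign/parity bookkeeping which flips these for odd $k$).

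The main obstacle I anticipate is the bookkeeping of signs and parity: the factor $(-1)^k$, the alternating recurrence ($a_n$ equal to $m$ or $1$ depending on parity of $n$), and the alternating sign of $\varphi^{-\lceil i/2\rceil}$-contributions all interact, so getting the exact endpoints of $A_k^{(1)}$ and $A_k^{(2)}$ — rather than just intervals of the right length — requires care. A secondary point is verifying that the geometric tail sums telescope to precisely $\frac{1}{\varphi^{k_0}}$, $\frac{1}{\varphi^{k_0+1}}$, $\frac{m-\sqrt d}{2\varphi^{k_0}}$, etc., which will follow from the identity $\sum_{j\ge 0}\varphi^{-j}\cdot(\text{max admissible digit pattern}) = (\text{closed form})$ using $q_{k+1}/q_k \to$ (the relevant limit) and the relations among $\varphi$, $m$, and $\sqrt d$ established in the first step.
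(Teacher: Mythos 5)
Your proposal follows essentially the same route as the paper's proof: multiply the tail $\sum_{i\ge k}b_i(n)q_i$ by $(-1)^k\varphi$, use the Binet-type formulas \eqref{qiLehmer} to peel off an integer part and leave a geometric correction of size $O(\varphi^{-\lceil k/2\rceil})$, and trap that correction between the stated endpoints using the digit bounds together with condition (iii) of Theorem \ref{Ostro}, with the case split on whether $b_{k-1}(n)=0$, which is indeed equivalent to $t_{\alpha}(n;k)<q_{k-1}$. One small correction to your parenthetical: since $(m+2)^2-d=4$, the minimal polynomial of $\varphi$ is $x^2-(m+2)x+1$, so $\varphi\varphi'=1$ (i.e.\ $\varphi'=\varphi^{-1}$, consistent with the $\varphi^{-\ell}$ terms in \eqref{qiLehmer}), not $m+1$; this does not affect the structure of your argument.
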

\begin{proof}
By \eqref{qiLehmer} and the definition of $t_{\alpha}(n;k)$,
	\begin{align*}
	&n\varphi-t_{\alpha}(n;k)\varphi\\
&=\sum\limits_{\ell\geq \ceil{k/2}} b_{2\ell}(n)\varphi\left(\frac{m+\sqrt{d}}{2\sqrt{d}}\varphi^{\ell}-\frac{m-\sqrt{d}}{2\sqrt{d}}\varphi^{-\ell}\right)\\
		& +\sum\limits_{\ell\geq \ceil{(k-1)/2}} b_{2\ell+1}(n)\varphi\left(\frac{\varphi^{\ell+1}}{\sqrt{d}}-\frac{\varphi^{-\ell-1}}{\sqrt{d}}\right)\\
		&=\sum\limits_{\ell\geq \ceil{k/2}} b_{2\ell}(n)q_{2\ell+2}+\frac{m-\sqrt{d}}{2\sqrt{d}}\sum\limits_{\ell\geq \ceil{k/2}} b_{2\ell}(n)(\varphi^{-\ell-1}-\varphi^{-\ell+1})\\
		& +\sum\limits_{\ell\geq \ceil{(k-1)/2}} b_{2\ell+1}(n)q_{2\ell+3}+\frac{1}{\sqrt{d}}\sum\limits_{\ell\geq \ceil{(k-1)/2}} b_{2\ell+1}(n)(\varphi^{-\ell-2}-\varphi^{-\ell})\\
		&=\sum\limits_{\ell\geq \ceil{k/2}} b_{2\ell}(n)q_{2\ell+2}+\sum\limits_{\ell\geq \ceil{(k-1)/2}} b_{2\ell+1}(n)q_{2\ell+3}\\
		& +\frac{(-m+\sqrt{d})}{2}\sum\limits_{\ell\geq \ceil{k/2}} \frac{b_{2\ell}(n)}{\varphi^{\ell}}-\sum\limits_{\ell\geq \ceil{(k-1)/2}} \frac{b_{2\ell+1}(n)}{\varphi^{\ell+1}}.
	\end{align*}
Note that the first two terms in the above expression are integers. We first consider the case when $k$ is even. Write $k=2k_0$, $k_0\in\mathbb{N}$. Now
	\begin{align*}
	\sum\limits_{\ell\geq \ceil{(k-1)/2}} \frac{b_{2\ell+1}(n)}{\varphi^{\ell+1}}&\leq m\sum\limits_{\ell\geq k_0}\frac{1}{\varphi^{\ell+1}}=\frac{-m+\sqrt{d}}{2\varphi^{k_0}}
	\end{align*}	
since 
	\[
	\frac{1}{1-\varphi^{-1}}=\frac{2}{-m+\sqrt{d}}.
	\]
Suppose that $b_{k-1}(n)=0$. Then
	\[
	\frac{(-m+\sqrt{d})}{2}\sum\limits_{\ell\geq \ceil{k/2}} \frac{b_{2\ell}(n)}{\varphi^{\ell}}\leq \frac{(-m+\sqrt{d})}{2}\sum\limits_{\ell\geq k_0} \frac{1}{\varphi^{\ell}}=\frac{1}{\varphi^{k_0}}.
	\]
If $b_{k-1}(n)\neq 0$, then by condition $(iii)$ of Theorem \ref{Ostro}, $b_k(n)\neq 1$. Hence $b_k(n)=0$ and
	\begin{align*}
	\frac{(-m+\sqrt{d})}{2}\sum\limits_{\ell\geq \ceil{k/2}} \frac{b_{2\ell}(n)}{\varphi^{\ell}}&\leq\frac{(-m+\sqrt{d})}{2}\sum\limits_{\ell\geq k_0+1} \frac{1}{\varphi^{\ell}}=\frac{1}{\varphi^{k_0+1}}.
	\end{align*}
This proves the lemma when $k$ is even.
Next, we consider the case when $k$ is odd. Write $k=2k_0+1$, $k_0\in\mathbb{N}$. Then
	\begin{align*}
	&n(-\varphi)-t_{\alpha}(n;k)(-\varphi)\\
	&\equiv \frac{(m-\sqrt{d})}{2}\sum\limits_{\ell\geq k_0+1} \frac{b_{2\ell}(n)}{\varphi^{\ell}}+\sum\limits_{\ell\geq k_0} \frac{b_{2\ell+1}(n)}{\varphi^{\ell+1}}\Mod{1}.
	\end{align*}
Since
	\[
	\sum\limits_{\ell\geq k_0+1} \frac{b_{2\ell}(n)}{\varphi^{\ell}}\leq \frac{1}{\varphi^{k_0+1}}\frac{2}{-m+\sqrt{d}},
	\]
we get
	\[
	 \frac{(m-\sqrt{d})}{2}\sum\limits_{\ell\geq k_0+1} \frac{b_{2\ell}(n)}{\varphi^{\ell}}\geq -\frac{1}{\varphi^{k_0+1}}.
	\]
Suppose that $b_{k-1}(n)=0$. Then
	\[
	\sum\limits_{\ell\geq k_0} \frac{b_{2\ell+1}(n)}{\varphi^{\ell+1}}\leq \frac{m}{\varphi^{k_0+1}}\frac{2}{-m+\sqrt{d}}=\frac{-m+\sqrt{d}}{2\varphi^{k_0}}.
	\]
If $b_{k-1}(n)\neq 0$, then by condition $(iii)$ of Theorem \ref{Ostro}, $b_k(n)\neq m$. Hence 
	\begin{align*}
	\sum\limits_{\ell\geq k_0} \frac{b_{2\ell+1}(n)}{\varphi^{\ell+1}}&\leq \frac{m-1}{\varphi^{k_0+1}}+m\sum\limits_{\ell\geq k_0+1} \frac{1}{\varphi^{\ell+1}}\\
	&=\frac{m-1}{\varphi^{k_0+1}}+\frac{-m+\sqrt{d}}{2\varphi^{k_0+1}}=\frac{-m-1+\sqrt{d}}{\varphi^{k_0}}.
	\end{align*}
This proves the lemma when $k$ is odd.
\end{proof}
\begin{lemma}\label{part}
Fix an integer $k\geq 2$. The sets 
	\[
	R_k(u)+\mathbb{Z},\ 0\leq u<q_k,
	\]
form a partition of $\mathbb{R}$.
\end{lemma}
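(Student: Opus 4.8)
The plan is to reduce the claim to two quantitative facts: that the half-open intervals $R_k(u)$, $0\le u<q_k$, have lengths summing to exactly $1$, and that the translates $R_k(u)+\mathbb{Z}$ together cover $\mathbb{R}$. Granting both, the conclusion follows at once: modulo $\mathbb{Z}$ one has a finite family of arcs whose total length equals the length of the circle and whose union is the whole circle, so by the union bound the pairwise overlaps are null sets; since two half-open arcs that meet necessarily share an arc of positive length, the arcs are in fact pairwise disjoint, and together with the covering property this says exactly that the sets $R_k(u)+\mathbb{Z}$ partition $\mathbb{R}$.

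For the length count I would first record the quadratic relation $\varphi^2-(m+2)\varphi+1=0$ implied by \eqref{phin}, equivalently $\varphi^{-1}=(m+2-\sqrt d)/2$ and $(\sqrt d-m)/2=1-\varphi^{-1}$ (the latter identity is already used in the proof of Lemma \ref{crit1}). Computing the two endpoints of $A_k^{(1)}$ and of $A_k^{(2)}$ in each parity, I expect $|A_k^{(1)}|=(2-\varphi^{-1})\varphi^{-k_0}$, $|A_k^{(2)}|=\varphi^{-k_0}$ when $k=2k_0$, and $|A_k^{(1)}|=\varphi^{-k_0}$, $|A_k^{(2)}|=(1-\varphi^{-1})\varphi^{-k_0}$ when $k=2k_0+1$; in particular each length is $<1$, so $R_k(u)+\mathbb{Z}$ has measure $|A_k^{(i)}|$ modulo $\mathbb{Z}$. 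Since $R_k(u)=\pPell(u)+A_k^{(1)}$ for the $q_{k-1}$ indices $u<q_{k-1}$ and $R_k(u)=\pPell(u)+A_k^{(2)}$ for the remaining $q_k-q_{k-1}$ indices, the total length is $q_{k-1}\,|A_k^{(1)}|+(q_k-q_{k-1})\,|A_k^{(2)}|$; I would substitute the closed forms \eqref{qiLehmer} for $q_{k-1}$ and $q_k$ and verify that, using $2\varphi^{-1}=m+2-\sqrt d$, the $k_0$-dependent terms cancel and the sum collapses to $1$ in both parities. This step, split into the even and odd cases, is the bulk of the work; it is routine but error-prone, and I would sanity-check it against $k=2$, where $q_2=m+1$ and $q_{k-1}=q_1=1$, so that the $m+1$ arcs $R_2(u)+\mathbb{Z}$ can be seen to abut cyclically by direct computation.

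For the covering I would use Lemma \ref{crit1}: for every non-negative integer $n$, $\pPell(n)=(-1)^k n\varphi$ lies in $R_k(t_{\alpha}(n;k))+\mathbb{Z}$, and $t_{\alpha}(n;k)$ always belongs to $\{0,1,\dots,q_k-1\}$, so $\Omega:=\bigcup_{0\le u<q_k}(R_k(u)+\mathbb{Z})$ contains $\{(-1)^k n\varphi:n\ge 0\}+\mathbb{Z}$. As $\varphi$ is irrational this set is dense in $\mathbb{R}$; and $\Omega$ is $\mathbb{Z}$-periodic and, modulo $\mathbb{Z}$, a finite union of half-open arcs, so it must then be all of $\mathbb{R}/\mathbb{Z}$. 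This last implication is elementary: the complement of $\Omega$ in the circle is a finite union of arcs with empty interior, hence a finite set, and a one-sided limit argument at any point supposedly omitted by $\Omega$ yields a contradiction. Hence $\Omega=\mathbb{R}$, completing the plan. The one place an idea is needed is this covering step, where Lemma \ref{crit1} together with the irrationality of $\varphi$ supplies denseness essentially for free; the remaining difficulty is just the case-by-case length computation.
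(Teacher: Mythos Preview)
Your proposal is correct and follows essentially the same route as the paper: compute that the total length of the arcs $R_k(u)$ is $1$ (split into the even and odd cases and using the closed forms \eqref{qiLehmer}), use Lemma~\ref{crit1} together with the irrationality of $\varphi$ to show the union covers $\mathbb{R}/\mathbb{Z}$, and deduce disjointness from the measure identity. The paper's covering step is phrased as ``if some $x$ is missed, then a whole interval $[x,x+\epsilon]$ is missed, contradicting density,'' which is exactly the half-open-arc argument you sketch; your length formulas $|A_k^{(i)}|$ agree with the paper's values (cf.\ the constants $b_H^{(i)}(0)$ in Lemma~\ref{prop1}).
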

\begin{proof}
For each integer $u$ with $0\leq u<q_k$, let
	\[
	\tilde{R}_k(u)=R_k(u)\textrm{ mod } 1.
	\]
This set is the union of at most two intervals. Further, the
sum of the measures of the sets $\tilde{R}_k(u)$ is $1$. To prove this, we first consider the case when
$k$ is even. Write $k=2k_0$. Then, the sum of measures is
	\begin{align*}
	&\frac{2-m+\sqrt{d}}{2\varphi^{k_0}}q_{k-1}+\frac{1}{\varphi^{k_0}}(q_{k}-q_{k-1})\\
	&=\frac{2-m+\sqrt{d}}{2\varphi^{k_0}}q_{k-1}+\frac{1}{\varphi^{k_0}}((m-1)q_{k-1}+q_{k-2})\\
	&=\frac{m+\sqrt{d}}{2\varphi^{k_0}}q_{k-1}+\frac{1}{\varphi^{k_0}}q_{k-2}\\
	&=\frac{m+\sqrt{d}}{2\varphi^{k_0}}\left(\frac{1}{\sqrt{d}}\varphi^{k_0}-\frac{1}{\sqrt{d}}\varphi^{-k_0}\right)\\
	&+\frac{1}{\varphi^{k_0}}\left(\frac{m+\sqrt{d}}{2\sqrt{d}}\varphi^{k_0-1}-\frac{m-\sqrt{d}}{2\sqrt{d}}\varphi^{-(k_0-1)}\right)\\
	&=\frac{m+\sqrt{d}}{2\sqrt{d}}(1+\varphi^{-1})-\frac{1}{\varphi^{2k_0}}\left(\frac{m+\sqrt{d}}{2\sqrt{d}}+\frac{m-\sqrt{d}}{2\sqrt{d}}\varphi\right)\\
	&=1-0=1.
	\end{align*}
A similar calculation shows that the sum of measures also equals one when $k$ is odd.
Now we show that the sets $\tilde{R}_k(u)$ cover the interval $[0,1)$. If not, pick 
	\[
	x\in[0,1)\setminus \bigcup\limits_{0\leq u<q_k}\tilde{R}_k(u).
	\]
Then, there exists $\epsilon>0$ such that the sets $[x,x+\epsilon]$ and $\bigcup\tilde{R}_k(u)$ are disjoint. Since the sequence $(\{p_k(n)\})$ is dense in $[0,1)$, there is an integer $n_0$ such that $\{p_k(n_0)\}\in [x,x+\epsilon]$. Therefore 
	\[
	\{p_k(n_0)\}\notin \bigcup\limits_{0\leq u<q_k}\tilde{R}_k(u).
	\]
This contradicts Lemma \ref{crit1}. Thus the interval $[0,1)$ is the union of the sets $\tilde{R}_k(u)$. \\
Finally, we show that these sets are disjoint. If not, there exist $x,v,w$ with $v\neq w$, such that
	\[
	x\in \tilde{R}_k(v)\cap\tilde{R}_k(w).
	\]
Then there is an $\epsilon>0$ such that
	\[
	\lambda(\tilde{R}_k(v)\cap\tilde{R}_k(w))\geq \epsilon.
	\]
Thus
	\begin{align*}
	1&=\lambda\left(\bigcup\limits_{0\leq u<q_k}\tilde{R}_k(u)\right)\\
	&=\lambda\left((\tilde{R}_k(v)\setminus(\tilde{R}_k(v)\cap\tilde{R}_k(w)))\cup \bigcup\limits_{u\neq v}\tilde{R}_k(u) \right)\\
	&\leq \sum\limits_{0\leq u<q_k}\lambda(\tilde{R}_k(u))-\epsilon=1-\epsilon,
	\end{align*}
which is a contradiction. Therefore  the sets $\tilde{R}_k(u)$ must be disjoint.
\end{proof}
As an immediate consequence of Lemmas \ref{crit1} and \ref{part}, we get
\begin{corollary}\label{criteria}
Let $n\geq 0$, $k\geq 2$ and $0\leq u<q_k$. Then
	\[
	t_{\alpha}(n;k)=u
	\]
if and only if
	\[
	(-1)^kn\varphi\in R_k(u)+\mathbb{Z}.
	\]
\end{corollary}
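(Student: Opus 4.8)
The plan is to read off the corollary directly from Lemmas \ref{crit1} and \ref{part}, the first supplying membership of $\pPell(n)$ in the appropriate coset and the second supplying uniqueness of that coset.

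For the forward implication, suppose $t_{\alpha}(n;k)=u$. Then Lemma \ref{crit1} applied to $n$ gives at once
	\[
	\pPell(n)\in R_k(t_{\alpha}(n;k))+\mathbb{Z}=R_k(u)+\mathbb{Z},
	\]
which is the desired conclusion. For the converse, set $u':=t_{\alpha}(n;k)$. I would first note that $u'$ is a non-negative integer with $0\leq u'<q_k$: this is the standard fact that truncating an Ostrowski $\alpha$-representation after the first $k$ digits yields an integer in $[0,q_k)$ (equivalently, as the admissible digit strings $b_0,\ldots,b_{k-1}$ range over all possibilities, $\sum_{0\leq i\leq k-1}b_iq_i$ runs over $\{0,1,\ldots,q_k-1\}$), which follows by an easy induction on $k$ using the recurrence $q_k=a_kq_{k-1}+q_{k-2}$ together with conditions $(i)$--$(iii)$ of Theorem \ref{Ostro}. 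In particular $R_k(u')$ is defined. By Lemma \ref{crit1} applied to $n$ we have $\pPell(n)\in R_k(u')+\mathbb{Z}$, while by hypothesis $\pPell(n)\in R_k(u)+\mathbb{Z}$. Hence $\pPell(n)$ lies in the intersection $(R_k(u)+\mathbb{Z})\cap(R_k(u')+\mathbb{Z})$, and since Lemma \ref{part} asserts that the sets $R_k(v)+\mathbb{Z}$ with $0\leq v<q_k$ are pairwise disjoint, this forces $u=u'$, i.e.\ $t_{\alpha}(n;k)=u$.

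There is essentially no obstacle here, the two lemmas doing all the work; the only point that warrants a moment's care is the verification that $0\leq t_{\alpha}(n;k)<q_k$, so that the index $u'$ falls within the range for which $R_k$ is defined and to which the disjointness statement of Lemma \ref{part} applies. This verification is routine given Theorem \ref{Ostro}.
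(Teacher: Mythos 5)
Your argument is correct and is precisely the one the paper intends: the paper gives no written proof, stating the corollary "as an immediate consequence of Lemmas \ref{crit1} and \ref{part}", and your use of Lemma \ref{crit1} for the forward direction and the disjointness in Lemma \ref{part} for the converse is exactly that deduction. Your extra check that $0\leq t_{\alpha}(n;k)<q_k$ is a sensible, routine verification.
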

We now present an inversion formula as in \eqref{dft1} for the function $e(\gamma S_{\alpha,k}(n))$.
\begin{lemma}\label{prop1}
Let $\gamma\in\mathbb{R}$ and $h,n\in\mathbb{Z}$ with $n\geq 0$.
Let $H,k$ be positive integers with $k\geq 2$. Define
	\begin{align*}
	M_k^{(1)}(h,\gamma)&=\sum\limits_{u<q_{k-1}}e(\gamma S_{\alpha}(u)-hp_k(u)),\\
	M_k^{(2)}(h,\gamma)&=\sum\limits_{q_{k-1}\leq u<q_k}e(\gamma S_{\alpha}(u)-hp_k(u)),
	\end{align*}
where $p_k(u)$ is as defined in Lemma \ref{crit1}. For $|h|\leq H$, there exist complex numbers $b_H^{(1)}(h), b_H^{(2)}(h), c_H^{(1)}(h)$ and $c_H^{(2)}(h)$ with
	\begin{align*}
	b_H^{(1)}(0)&=\begin{cases}
				\frac{2-m+\sqrt{d}}{2\varphi^{k_0}}&\mbox{if } k=2k_0,\ k_0\in\mathbb{Z}\\
				\frac{1}{\varphi^{k_0}}&\mbox{if } k=2k_0+1,\ k_0\in\mathbb{Z},
				\end{cases}\\
	b_H^{(2)}(0)&=\begin{cases}
					\frac{1}{\varphi^{k_0}}	&\mbox{if } k=2k_0,\ k_0\in\mathbb{Z}\\
					\frac{-m+\sqrt{d}}{2\varphi^{k_0}}	&\mbox{if } k=2k_0+1,\ k_0\in\mathbb{Z}
				  \end{cases}
	\end{align*}
and for $i=1,2$,	
	\begin{align*}			  
	|b_H^{(i)}(h)|&\leq\min\left(b_H^{(i)}(0),\frac{1}{|h|}\right)\textrm{ if } h\neq 0,\\
	|c_H^{(i)}(h)|&\leq2
	\end{align*}
such that
	\begin{align*}
	e(\gamma S_{\alpha,k}(n))&=\sum\limits_{i=1}^2\Big(\sum\limits_{|h|\leq H}b_H^{(i)}(h)e(hp_k(n))M_k^{(i)}(h,\gamma)\Big)\\
	&+O\Big( \frac{1}{H}\sum\limits_{|h|\leq H}c_H^{(1)}(h)e(hp_k(n)) \sum\limits_{u<q_{k-1}}e(-hp_k(u))\Big)\\
	&+O\Big( \frac{1}{H}\sum\limits_{|h|\leq H}c_H^{(2)}(h)e(hp_k(n)) \sum\limits_{q_{k-1}\leq u<q_k}e(-hp_k(u))\Big),
	\end{align*}
where the expression in the parantheses is a non-negative real number.
\end{lemma}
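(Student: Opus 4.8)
The plan is to express $e(\gamma S_{\alpha,k}(n))$ as a finite sum of interval indicators by means of Corollary~\ref{criteria}, and then to replace each indicator by a trigonometric polynomial of degree $\le H$ via the Beurling--Selberg--Vaaler construction. To begin, I note that $t_{\alpha}(n;k)=\sum_{0\le i<k}b_i(n)q_i<q_k$ and that the truncated digit string $b_0(n),\dots,b_{k-1}(n)$ still satisfies conditions $(i)$--$(iii)$ of Theorem~\ref{Ostro}, so by uniqueness it is the Ostrowski representation of $t_{\alpha}(n;k)$; hence $S_{\alpha,k}(n)=S_{\alpha}(t_{\alpha}(n;k))$ and
\[
e(\gamma S_{\alpha,k}(n))=\sum_{0\le u<q_k}\mathbf{1}[t_{\alpha}(n;k)=u]\,e(\gamma S_{\alpha}(u)).
\]
I set $U_1=\{0\le u<q_{k-1}\}$, $U_2=\{q_{k-1}\le u<q_k\}$ and, for $i=1,2$, let $\chi_i$ be the indicator of $A_k^{(i)}+\mathbb{Z}$. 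Since $p_k(v)=(-1)^k v\varphi$ gives $p_k(n)-p_k(u)=(-1)^k(n-u)\varphi$, Corollary~\ref{criteria} shows that for $u\in U_i$ the event $t_{\alpha}(n;k)=u$ is exactly $\chi_i(p_k(n)-p_k(u))=1$, whence
\[
e(\gamma S_{\alpha,k}(n))=\sum_{i=1}^{2}\sum_{u\in U_i}e(\gamma S_{\alpha}(u))\,\chi_i\bigl(p_k(n)-p_k(u)\bigr).
\]

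Next I would approximate each $\chi_i$. Let $a_i<b_i$ be the endpoints of the interval $A_k^{(i)}$ (reals, possibly negative); since $b_i-a_i<1$ — which I would check from the definitions, just as in the measure computation in the proof of Lemma~\ref{part} — we have $\chi_i(x)=(b_i-a_i)+\psi(x-b_i)-\psi(x-a_i)$ for $x$ not congruent to $a_i$ or $b_i$ modulo $1$, where $\psi(x)=\{x\}-\tfrac12$. Taking $\psi^*$ to be Vaaler's trigonometric polynomial of degree $\le H$ approximating $\psi$, so that $\widehat{\psi^*}(0)=0$, $|\widehat{\psi^*}(h)|\le\frac{1}{2\pi|h|}$ for $1\le|h|\le H$, and $|\psi(x)-\psi^*(x)|\le\frac{1}{2(H+1)}F_H(x)$ with $F_H(x)=\sum_{|h|\le H}(1-\frac{|h|}{H+1})e(hx)\ge0$ the Fej\'er kernel, I put $B^{(i)}(x)=(b_i-a_i)+\psi^*(x-b_i)-\psi^*(x-a_i)$, which is the average of the Beurling--Selberg majorant and minorant of $\chi_i$ of degree $H$. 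Then $\widehat{B^{(i)}}(0)=b_i-a_i$, for $h\neq0$
\[
|\widehat{B^{(i)}}(h)|=|\widehat{\psi^*}(h)|\cdot 2|\sin(\pi h(b_i-a_i))|\le\min\bigl(\tfrac{1}{|h|},\,b_i-a_i\bigr),
\]
and $|\chi_i(x)-B^{(i)}(x)|\le\frac{1}{2(H+1)}\bigl(F_H(x-a_i)+F_H(x-b_i)\bigr)$ for every $x$. I set $b_H^{(i)}(h)=\widehat{B^{(i)}}(h)$; that $b_H^{(i)}(0)=b_i-a_i$ agrees with the asserted constant is precisely the arc-length computation already made in the proof of Lemma~\ref{part}.

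I would then substitute $\chi_i=B^{(i)}+(\chi_i-B^{(i)})$ and expand $B^{(i)}$ in its Fourier series; an interchange of the finite sums turns the $B^{(i)}$-contribution into
\[
\sum_{i=1}^{2}\sum_{|h|\le H}b_H^{(i)}(h)\,e(hp_k(n))\sum_{u\in U_i}e\bigl(\gamma S_{\alpha}(u)-hp_k(u)\bigr)=\sum_{i=1}^{2}\sum_{|h|\le H}b_H^{(i)}(h)\,e(hp_k(n))\,M_k^{(i)}(h,\gamma),
\]
the required main term. For the remainder I would bound $|e(\gamma S_{\alpha}(u))|=1$ and sum the pointwise estimate for $|\chi_i-B^{(i)}|$ over $u\in U_i$; expanding $F_H$ and putting $c_H^{(i)}(h)=(1-\frac{|h|}{H+1})\bigl(e(-ha_i)+e(-hb_i)\bigr)$, so that $|c_H^{(i)}(h)|\le2$, the $i$-th remainder is at most $\frac{1}{2(H+1)}$ times
\[
\sum_{|h|\le H}c_H^{(i)}(h)\,e(hp_k(n))\sum_{u\in U_i}e(-hp_k(u))=\sum_{u\in U_i}\Bigl(F_H\bigl(p_k(n)-p_k(u)-a_i\bigr)+F_H\bigl(p_k(n)-p_k(u)-b_i\bigr)\Bigr),
\]
which is a non-negative real number since $F_H\ge0$. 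Absorbing the factor $\frac{1}{2(H+1)}$ into $O(1/H)$ produces exactly the two error terms in the statement.

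I expect the only genuine work to be the case analysis in even and odd $k$ needed to pin down the endpoints $a_i,b_i$ of $A_k^{(i)}$ modulo $1$, and hence to confirm the explicit constants $b_H^{(i)}(0)$, together with invoking Vaaler's approximation in the sharp form $|\widehat{\psi^*}(h)|\le\frac{1}{2\pi|h|}$ (which is what yields the bound $|b_H^{(i)}(h)|\le 1/|h|$ rather than a larger multiple); the reduction through Corollary~\ref{criteria} and the rearrangement of the sums are purely formal.
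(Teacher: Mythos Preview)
Your proposal is correct and follows essentially the same route as the paper: both reduce $e(\gamma S_{\alpha,k}(n))$ via Corollary~\ref{criteria} to a sum over $u\in U_1\cup U_2$ of indicator functions of the translated intervals $p_k(u)+A_k^{(i)}+\mathbb{Z}$ evaluated at $p_k(n)$, and then replace each indicator by its Vaaler approximation with Fej\'er-kernel error. The paper quotes the indicator expansion as a ready-made formula from \cite[Eqn.~(5.7)]{Spt} rather than rebuilding it from $\psi(x)=\{x\}-1/2$ as you do, but the resulting coefficients $b_H^{(i)}(h)=a_H'(h)e(-hb_i)$ and $c_H^{(i)}(h)=(1-\tfrac{|h|}{H+1})(e(-ha_i)+e(-hb_i))$ and the subsequent rearrangement are identical.
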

\begin{proof}
Let $u\in[0,q_k)$ and let
	\[
	\chi_u=\chi_{R_k(u)+\mathbb{Z}},
	\]
denote the indicator function of $R_k(u)+\mathbb{Z}$. Using Corollary \ref{criteria}, we get
	\begin{align*}
	e(\gamma S_{\alpha,k}(n))&=e(\gamma S_{\alpha}(t(n;k)))=\sum\limits_{u<q_k}e(\gamma S_{\alpha}(u))\chi_u(p_k(n))\\
				&=\sum\limits_{u<q_{k-1}}e(\gamma S_{\alpha}(u))\chi_u(p_k(n))+\sum\limits_{q_{k-1}\leq u<q_k}e(\gamma S_{\alpha}(u))\chi_u(p_k(n)).
	\end{align*}
Using Vaaler's \cite{Vaa} trigonometric polynomial approximation to the function $\{x\}-1/2$, one obtains (see \cite[Eqn. (5.7)]{Spt}),
	\begin{align*}
	\chi_{[a,b)+\mathbb{Z}}(x)&=\sum\limits_{|h|\leq H}a_H'(h)e(h(x-b))+O(\kappa_H(x-b)+\kappa_H(x-a)),
	\end{align*}
where
	\begin{align}\label{a_H}
	a_H'(0)=b-a,\ |a_H'(h)|\leq \min\left(b-a,\frac{1}{|h|}\right) \textrm{ if } h\neq 0
	\end{align}
and 
	\[
	\kappa_H(t)=\frac{1}{2(H+1)}\sum\limits_{|h|\leq H}\left(1-\frac{|h|}{H+1}\right)e(ht).
	\]
Further, $\kappa_H(t)$ is a non-negative real number for all real numbers $t$.
Write $R_k(u)=[p_k(u)+c,p_k(u)+d)$, where the values of $c$ and $d$ can be seen from Lemma \ref{crit1}. Then,
	\begin{align*}
	&\sum\limits_{u<q_{k-1}}e(\gamma S_{\alpha}(u))\chi_u(p_k(n))\\
	&=\sum\limits_{|h|\leq H}b_H^{(1)}(h)e(hp_k(n))\sum\limits_{u<q_{k-1}}e(\gamma S_{\alpha}(u)-hp_k(u))\\
	&+O\left(\sum\limits_{\beta\in\{c,d\}}\sum\limits_{u<q_{k-1}}\kappa_H(p_k(n)-p_k(u)-\beta)\right),
	\end{align*}
where
	\[
	b_H^{(1)}(h)=a_H'(h)e(-hd).
	\]
Using \eqref{a_H}, we get
	\[
	b_H^{(1)}(0)= d-c
	\]
and
	\[
	|b_H^{(1)}(h)|\leq \min\left(b_H^{(1)}(0),\frac{1}{|h|}\right)\textrm{ if } h\neq 0.
	\]
Let $\beta\in\{c,d\}$. Since $\kappa_H(t)$ is a non-negative real number for all real numbers $t$, we have
	\begin{align*}
	&|\sum\limits_{u<q_{k-1}}\kappa_H(p_k(n)-p_k(u)-\beta)|\\
	&\leq \frac{1}{H}\sum\limits_{|h|\leq H}\left(1-\frac{|h|}{H+1}\right)\sum\limits_{u<q_{k-1}}e(h(p_k(n)-p_k(u)-\beta)).
	\end{align*}
Thus, we obtain the term with $i=1$ claimed in the lemma with
	\[
	c_H^{(1)}(h)=\left(1-\frac{|h|}{H+1}\right)(e(-hc)+e(-hd)),
	\]
whose absolute value is at most $2$. The proof for the term with $i=2$ is similar.
\end{proof}
\begin{lemma}\label{matrix}
Let $\gamma\in\mathbb{R}$ with $\norm{m\gamma}\neq 0$. 
Then there exist $C,\eta>0$ such that for all $\beta\in\mathbb{R}$ and $k\geq 2$, we have
	\[
	\Big|\frac{1}{q_k}\sum\limits_{0\leq u<q_{k}}e(\gamma S_{\alpha}(u)+\beta u)\Big|\leq Ce^{-k\eta}.
	\]
\end{lemma}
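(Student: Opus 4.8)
The plan is to turn the normalized sums $\rho_k:=q_k^{-1}\big|\sum_{0\le u<q_k}e(\gamma S_\alpha(u)+\beta u)\big|$ into a transfer-matrix (equivalently, two-term) recursion and to read off exponential decay from $\norm{m\gamma}\neq 0$. Put $P_k=P_k(\gamma,\beta)=\sum_{0\le u<q_k}e(\gamma S_\alpha(u)+\beta u)$, so $P_0=P_1=1$, and for $a\ge 1$ set $\sigma_a(\vartheta)=\sum_{0\le j<a}e(j\vartheta)$. Splitting the range $0\le u<q_{k+1}$ according to the leading Ostrowski digit $b_k(u)\in\{0,\dots,a_{k+1}\}$, and using condition $(iii)$ of Theorem~\ref{Ostro} (which forces $u-a_{k+1}q_k<q_{k-1}$ when $b_k(u)=a_{k+1}$), one obtains for every $k\ge 1$
\[
P_{k+1}=\sigma_{a_{k+1}}(\gamma+\beta q_k)\,P_k+e\big(a_{k+1}(\gamma+\beta q_k)\big)\,P_{k-1},
\]
where $a_{k+1}=m$ for $k$ odd and $a_{k+1}=1$ for $k$ even. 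Since $|\sigma_{a_{k+1}}|\le a_{k+1}$ and the second coefficient is unimodular, dividing by $q_{k+1}=a_{k+1}q_k+q_{k-1}$ gives $\rho_{k+1}\le A_k\rho_k+B_k\rho_{k-1}$ with $A_k=|\sigma_{a_{k+1}}(\gamma+\beta q_k)|q_k/q_{k+1}$, $B_k=q_{k-1}/q_{k+1}$, and $A_k+B_k\le 1$, with equality exactly when $|\sigma_{a_{k+1}}(\gamma+\beta q_k)|=a_{k+1}$. Hence $m_k:=\max(\rho_k,\rho_{k-1})$ is non-increasing with $m_1=1$ (which already recovers $|P_k|\le q_k$); the real work is to produce a positive proportion of indices at which $A_k+B_k\le 1-\eta_4$ for a fixed $\eta_4>0$.

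A saving can only come from odd $k$, where $a_{k+1}=m\ge 2$ (because $\sigma_1\equiv 1$). From the identity $|\sigma_m(\vartheta)|^2=m^2-2\sum_{d=1}^{m-1}(m-d)(1-\cos2\pi d\vartheta)$ and $|\sin\pi\vartheta|\ge 2\norm{\vartheta}$ one gets, keeping only the $d=1$ term,
\[
\norm{\vartheta}\ge\varepsilon\ \Longrightarrow\ |\sigma_m(\vartheta)|\le\sqrt{m^2-16\varepsilon^2}\le m-\tfrac{8\varepsilon^2}{m},
\]
so at such a step $A_k+B_k\le 1-\tfrac{8\varepsilon^2}{m}\cdot\tfrac{q_k}{q_{k+1}}\le 1-\tfrac{8\varepsilon^2}{m(m+1)}=:1-\eta_4$, using $q_{k+1}=mq_k+q_{k-1}\le(m+1)q_k$. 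The role of $\norm{m\gamma}\neq 0$ is that $\gamma+\beta q_k$ cannot lie within $\varepsilon$ of $\mathbb Z$ for three consecutive odd indices $k$ when $\varepsilon$ is small: the subsequence $(q_{2\ell+1})$ obeys the characteristic relation $q_{2\ell+3}=(m+2)q_{2\ell+1}-q_{2\ell-1}$ of the period-$2$ recurrence (root $\varphi$), whence
\[
m\gamma=(m+2)(\gamma+\beta q_{2\ell+1})-(\gamma+\beta q_{2\ell-1})-(\gamma+\beta q_{2\ell+3}),
\]
so $\norm{m\gamma}\le(m+2)\norm{\gamma+\beta q_{2\ell+1}}+\norm{\gamma+\beta q_{2\ell-1}}+\norm{\gamma+\beta q_{2\ell+3}}$. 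Choosing $\varepsilon=\norm{m\gamma}/(m+4)>0$, in each block $\{2\ell-1,2\ell+1,2\ell+3\}$ with $\ell\ge 1$ at least one index $k$ satisfies $\norm{\gamma+\beta q_k}\ge\varepsilon$; call such $k$ \emph{good}. Good indices are odd, meet $\{1,3,5\}$, and consecutive good indices differ by at most $6$.

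It remains to let the gain at a good index survive the intervening (even) step, which by itself gives nothing. If $k$ is good then $\rho_{k+1}\le(1-\eta_4)m_k$; at the next step $k+1$ (even), where $a_{k+2}=1$, so $A_{k+1}=q_{k+1}/(q_{k+1}+q_k)\ge\tfrac12$, we get $\rho_{k+2}\le A_{k+1}(1-\eta_4)m_k+B_{k+1}m_k\le(1-\tfrac{\eta_4}{2})m_k$, hence $m_{k+2}\le(1-\tfrac{\eta_4}{2})m_k$. Combining this with monotonicity of $(m_k)$ and the bound $6$ on the gaps between good indices yields $m_k\le C(1-\tfrac{\eta_4}{2})^{k/6}$ for all $k\ge 2$, i.e.\ $q_k^{-1}|P_k|=\rho_k\le m_k\le Ce^{-\eta k}$ with $\eta=-\tfrac16\log(1-\tfrac{\eta_4}{2})>0$ and with $C,\eta$ depending only on $m$ and $\norm{m\gamma}$. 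The main obstacle is exactly this last interplay: the $\sigma_m$-estimate saves only at positions with $a_{k+1}=m$ and only when $\gamma+\beta q_k$ avoids a neighbourhood of $\mathbb Z$, the positions with $a_{k+1}=1$ are useless, and the offsets $\gamma+\beta q_k$ are outside our control; the hypothesis $\norm{m\gamma}\neq 0$ is what rules out the degenerate scenario, and tracking $m_k=\max(\rho_k,\rho_{k-1})$ together with the bound $A_{k+1}\ge\tfrac12$ is what carries the saving past the trivial step.
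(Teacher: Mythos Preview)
Your proof is correct and follows the same overall strategy as the paper: both track $m_k=\max(\rho_k,\rho_{k-1})$ (the paper writes $\tilde M_k=\max(|\mu_k|,|\mu_{k-1}|)$), show it is non-increasing via the two-term recursion for $P_k$, and then extract a fixed contraction at a positive density of indices from the hypothesis $\norm{m\gamma}\neq 0$. The differences are in execution. The paper outsources the contraction step to \cite[Lemma~4]{Fou}, obtaining directly $\tilde M_{k+2}\le\tilde M_{k-1}\bigl(1-\min(\tfrac{1}{12},\tfrac{3}{20m}\norm{c_k}^2)\bigr)$ with $c_k=\gamma+\beta q_k$, and then uses the one-step recurrence $q_{k+3}=a_{k+3}q_{k+2}+q_{k+1}$ to show that among any three \emph{consecutive} indices one has $\norm{c_{k+i}}\ge\min(\norm{\gamma},\norm{m\gamma})/(3m)$. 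You instead derive the contraction from scratch via the explicit bound $|\sigma_m(\vartheta)|\le m-8\varepsilon^2/m$, which only yields a saving at \emph{odd} $k$ (where $a_{k+1}=m\ge 2$); you then locate good odd indices through the skip-recurrence $q_{2\ell+3}=(m+2)q_{2\ell+1}-q_{2\ell-1}$ and the resulting identity $m\gamma=(m+2)c_{2\ell+1}-c_{2\ell-1}-c_{2\ell+3}$. This choice forces the additional step of pushing the gain across the intervening even index (where $\sigma_1\equiv 1$ gives nothing), which you handle cleanly with $A_{k+1}=q_{k+1}/(q_{k+1}+q_k)\ge\tfrac12$. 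Your route is a little longer but entirely self-contained, whereas the paper's is shorter because the contraction inequality is quoted from \cite{Fou}; the constants you obtain ($\eta$ depending on $\norm{m\gamma}^2/m^2$, gaps of size $6$) are comparable to the paper's ($\norm{m\gamma}^2/m^3$, gaps of size $5$).
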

\begin{proof}
Set 
	\[
	\mu_k=\mu_k(\gamma,\beta)=\frac{1}{q_k}\sum\limits_{0\leq u<q_{k}}e(\gamma S_{\alpha}(u)+\beta u),
	\]
	\[
	\tilde{M}_k=\max(|\mu_k|,|\mu_{k-1}|) \textrm{ and } c_k=\gamma+\beta q_k.
	\]
By \cite[Lemma 4 \& p. 333]{Fou}, $(\tilde{M}_k)$ is a decreasing sequence and for $k\geq 1$,
	\begin{align}
\nonumber	\tilde{M}_{k+2}&\leq \tilde{M}_{k-1}\left(1-\min\left(\frac{1}{12},\frac{3}{20}\left(1+\frac{1}{a_k}-\frac{1}{a_{k+1}}\right)a_{k+1}^2\norm{c_k}^2\right)\right)\\
\label{dif}	&\leq\tilde{M}_{k-1}\left(1-\min\left(\frac{1}{12},\frac{3}{20m}\norm{c_k}^2\right)\right)=:\tilde{M}_{k-1}\psi_k,
	\end{align}
as $a_i\in\{1,m\}$ for $i\geq 1$. Observe that
	\begin{align*}
	\norm{a_{k+3}\gamma}&= \norm{a_{k+3}\gamma+\beta(a_{k+3}q_{k+2}+q_{k+1}-q_{k+3})} \\
	&\leq \norm{c_{k+1}}+a_{k+3}\norm{c_{k+2}}+\norm{c_{k+3}}.
	\end{align*}
Thus
	\[
	\min(\norm{\gamma},\norm{m\gamma})\leq m\sum\limits_{1\leq i\leq 3}\norm{c_{k+i}},
	\]
implying that for every $k$, there exists $i\in\{1,2,3\}$ such that
	\[
	\norm{c_{k+i}}\geq\frac{\min(\norm{\gamma},\norm{m\gamma})}{3m}
	\]
and hence
	\begin{align*}
	\psi_{k+i}&\leq 1-\min\left(\frac{1}{12},\frac{\min(\norm{\gamma},\norm{m\gamma})^2}{60m^3}\right)\\
	&=1-\frac{\min(\norm{\gamma},\norm{m\gamma})^2}{60m^3}=:\psi.
	\end{align*}
Since $\norm{m\gamma}\neq 0$, $\psi<1$.
Fix $k$. Then there exists $k_i\in\{k-2,k-3,k-4\}$ such that $\psi_{k_i}\leq \psi$. Using \eqref{dif} and the fact that $(\tilde{M}_j)$ is a decreasing sequence, we get
	\[
	\tilde{M}_k\leq\tilde{M}_{k_i+2}\leq \tilde{M}_{k_i-1}\psi_{k_i}\leq \tilde{M}_{k-5}\psi.
	\]
Applying this repeatedly, we get
	\[
	\tilde{M}_k\leq \tilde{M}_5\psi^{k/5},
	\]
completing the proof of the lemma.
\end{proof}
\section{Proof of Theorem \ref{thm1}}\label{last}
With all the ingredients in place, the theorem follows as in \cite{Spt}. We include the details below.
Set
	\begin{align*}
	R&=\floor{N^{a-\eta a/(2\ln\varphi)}},\ H=\floor{N^{4a+\eta a/(2\ln\varphi)}},\\
	t&=\floor{a\ln N/\ln q},\ k=\floor{2a\ln N/\ln\varphi},
	\end{align*}
where $\eta$ is as in Lemma \ref{matrix}, $\varphi$ is as in \eqref{phin} and $a$ is sufficiently small.
Let
	\[
	g_{q}(n)=e(\theta S_{q}(n)),\ g_{q,t}(n)=e(\theta S_{q,t}(n)),
	\]
	\[
	g_{\alpha}(n)=e(\gamma S_{\alpha}(n)),\ g_{\alpha,k}(n)=e(\gamma S_{\alpha,k}(n))
	\]
and
	\[
	g(n)=g_{q}(n)g_{\alpha}(n).
	\]
We denote by $N'$ the largest multiple of $q^t$ not exceeding $N$.
By Lemmas \ref{corput} and \ref{trunc}, we get
	\begin{align}
\nonumber	&|\sum\limits_{n<N}g(n)|^2\\
\nonumber	&\ll\frac{N}{R}\sum\limits_{|r|<R}\Big(1-\frac{|r|}{R}\Big)\Big|\sum\limits_{0\leq n,n+r< N}g(n+r)\overline{g(n)}\Big|\\
\nonumber	&=\frac{N}{R}\sum\limits_{|r|<R}\Big(1-\frac{|r|}{R}\Big)\Big|\sum\limits_{0\leq n< N'}g_{q,t}(n+r)\overline{g_{q,t}(n)}g_{\alpha,k}(n+r)\overline{g_{\alpha,k}(n)}\Big|\\
\label{main_in}	&+O\left(NR+Nq^t+\frac{N^2R}{q^{t}}+\frac{N^2R}{q_{k-1}}\right).
	\end{align}
We use the expressions for $g_{q,t}(n+r), g_{q,t}(n), g_{\alpha,k}(n+r), g_{\alpha,k}(n)$ from \eqref{dft1}, \eqref{dft2} and Lemma \ref{prop1}. In the product $g_{\alpha,k}(n+r)g_{\alpha,k}(n)$, there are sixteen summands of the following three kinds: four products of the main terms in the expressions for $g_{\alpha,k}(n+r)$ and $g_{\alpha,k}(n)$, eight products of the main terms and error terms, four products of the error terms. We now consider these three cases separately.\\\\
\textbf{Case I.} \textit{(Summands with both factors as main terms)}\\
Let $h=h_1+h_2$ and $\ell=\ell_1+\ell_2$. We need to estimate
	\begin{align*}
	&\frac{N}{R^2}\sum\limits_{\substack{\ell_1,\ell_2<q^t\\ |h_1|,|h_2|\leq H}}G_t(\ell_1)\overline{G_t(-\ell_2)}b_H^{(i)}(h_1)\overline{b_H^{(j)}(-h_2)}M_k^{(i)}(h_1,\gamma)\overline{M_k^{(j)}(-h_2,\gamma)}\\
	&\times \sum\limits_{|r|<R}(R-|r|)e\Big(r\Big(\frac{\ell_1}{q^t}+h_1(-1)^k\varphi\Big)\Big)\sum\limits_{n<N'}e\Big(n\Big(\frac{\ell}{q^t}+(-1)^kh\varphi\Big)\Big)
	\end{align*}
We first consider the subcase when $h=0$. If $\ell\not\equiv 0\pmod{q^t}$, then $\sum\limits_{n<N'}e\Big(n\ell q^{-t}\Big)\Big)=0$ as $q^t|N'$. Hence we assume that $\ell\equiv 0\pmod{q^t}$. We estimate the sum over $n$ trivially. By \eqref{Parseval} and Lemmas \ref{elem}, \ref{prop1}, \ref{matrix} and \ref{discrep}, we find that the above expression is
	\begin{align*}
	&\ll\frac{N^2q_{k}}{e^{\eta k}R^2}\sum\limits_{\ell_1<q^t}|G_t(\ell_1)|^2\sum\limits_{|h_1|\leq H}\min\Big(\frac{1}{|h_1|},\frac{1}{\varphi^{k_0}}\Big)\min\Big(R^2, \left\Vert\frac{\ell_1}{q^{t}}+h_1(-1)^k\varphi \right\Vert^{-2}\Big)\\
	&\ll\frac{N^2q_{k}}{e^{\eta k}R^2}\sup\limits_{\ell_1\in\mathbb{Z}} \sum\limits_{|h|\leq H} \min\Big(\frac{1}{|h|},\frac{1}{q_k}\Big)\min\Big(R^2,\norm{\ell_1q^{-t}+|h|\varphi}^{-2}\Big)\\
	&\ll\frac{N^2q_{k}}{e^{\eta k}R^2}\sum\limits_{s< H/q_k} \min\Big(\frac{1}{sq_k},\frac{1}{q_k}\Big)\sup\limits_{u\in\mathbb{R}}\sum\limits_{h< q_k}\min\Big(R^2,\norm{u+(sq_k+h)\varphi}^{-2}\Big)\\
	&\ll\frac{N^2}{e^{\eta k}R^2}\ln H(Rq_k+R^2\ln q_k)=N^2e^{-\eta k}\ln H(q_kR^{-1}+\ln q_k).
	\end{align*}
	Next, we consider the case when $h\neq 0$. Since $\varphi$ is badly approximable, there is a constant $c_1$ such that for all integers $\ell',h'$ with $h'\neq 0$
		\[
		\big|\varphi+\frac{\ell'}{h'q^t}\big|>\frac{c_1}{(h'q^{t})^2}
		\]
		implying that
		\be\label{scr}
		\norm{h\varphi+\ell q^{-t}}>c_1(hq^{2t})^{-1}.
		\ee
We estimate $G_t$ and the sum over $r$ trivially, and use Lemma \ref{elem} together with \eqref{scr} to obtain the following upper bound.
		\begin{align*}
		&Nq^{2t}\sum\limits_{\substack{|h_1|,|h_2|\leq H\\ h_1+h_2\neq 0}}\sup\limits_{\ell\in\mathbb{Z}}\Big|\sum\limits_{n<N'}e(n(\ell q^{-t}+(-1)^kh\varphi))\Big|\\
		&=Nq^{2t}\sum\limits_{1\leq |h|\leq 2H}(2H+1-|h|)\sup\limits_{\ell\in\mathbb{Z}}\Big|\sum\limits_{n<N'}e(n(\ell q^{-t}+h\varphi))\Big|\\
		&\ll NHq^{2t}\sum\limits_{1\leq |h|\leq 2H}hq^{2t}\ll NH^3q^{4t}.
		\end{align*}
\\
\textbf{Case II.} \textit{(Summands with exactly one factor as main term)}\\
Suppose that the main term comes from the expression for $g_{\alpha,t}(n+r)$. (The other case is similar.) Then, we need to estimate
	\begin{align*}
	&\frac{N}{R}\sum\limits_{|r|<R}\Big(1-\frac{|r|}{R}\Big)\Big|\sum\limits_{n<N'}\sum\limits_{\substack{\ell_1,\ell_2<q^t\\|h_1|\leq H}}e(\ell_1(n+r)q^{-t}+\ell_2nq^{-t}+h_1p_k(n+r))\\
	&G_t(\ell_1,\theta)\overline{G_t(-\ell_2,\theta)}
	b_H^{(i)}(h_1)M_k^{(i)}(h_1,\gamma)\\
	&\times O\Big( \frac{1}{H}\sum\limits_{|h_2|\leq H}c_H^{(j)}(h_2)e(h_2p_k(n)) \sum\limits_{u}e(-h_2p_k(u))\Big)\Big|.
	\end{align*}
Recall that the expression in the error term is a non-negative real number. We estimate $G_t$ and the sum over $r$ trivially. Further, using Lemmas \ref{prop1}, \ref{elem} and \ref{discrep}, we obtain the following upper bound.
	\begin{align*}
	&\frac{Nq^{2t}q_k}{H}\sum\limits_{|h_1|\leq H}\min\Big(b_H^{(i)}(0),\frac{1}{|h_1|}\Big)\\
	&\times \sum\limits_{|h_2|\leq H}\big|\sum\limits_{u}e(-h_2p_k(u))\big| \big|\sum\limits_{n<N'}e(h_2p_k(n))\big| \\
	&\ll\frac{Nq^{2t}q_k\ln H}{H}\sum\limits_{|h_2|\leq H}  \min\left(q_k,\frac{1}{||h_2\varphi||}\right)\min\left(N,\frac{1}{||h_2\varphi||}\right)\\
	&\ll\frac{Nq^{2t}q_k\ln H}{H}\sum\limits_{|h_2|\leq H}  \min\left(q_kN,\frac{1}{||h_2\varphi||^2}\right)\\
	&\ll N^{3/2}q^{2t}q_k^{3/2}\ln H+N^2q^{2t}q_k^2\frac{(\ln H)^2}{H}.
	\end{align*}
\textbf{Case III.} \textit{(Summands with both factors as error terms)}\\
We estimate one of the error terms trivially by $q_k$. Proceeding as in Case II, we get
	\begin{align*}
	&\frac{N}{R}\sum\limits_{|r|<R}\Big(1-\frac{|r|}{R}\Big)\Big|\sum\limits_{n<N'}\sum\limits_{\substack{\ell_1,\ell_2<q^t}}e(\ell_1(n+r)q^{-t}+\ell_2nq^{-t})\\
	&G_t(\ell_1,\theta)\overline{G_t(-\ell_2,\theta)}\\
	&\times O\Big( \frac{1}{H}\sum\limits_{|h_1|\leq H}c_H^{(i)}(h_1)e(h_1p_k(n+r)) \sum\limits_{u}e(-h_1p_k(u))\Big)\Big|\\
	&\times O\Big( \frac{1}{H}\sum\limits_{|h_2|\leq H}c_H^{(j)}(h_2)e(h_2p_k(n)) \sum\limits_{u}e(-h_2p_k(u))\Big)\Big|\\
	&\ll\frac{Nq^{2t}q_k}{H}\sum\limits_{|h_2|\leq H}\big|\sum\limits_{u}e(-h_2p_k(u))\big| \big|\sum\limits_{n<N'}e(h_2p_k(n))\big| \\
	&\ll N^{3/2}q^{2t}q_k^{3/2}+N^2q^{2t}q_k^2\frac{\ln H}{H}.
	\end{align*}
	Combining the three cases and \eqref{main_in}, we get
	\[
	|\sum\limits_{n<N}g(n)|^2\ll N^{2-2\delta}
	\]
	for some $\delta>0$.\qed\\\\
\noindent\textbf{Proof of Corollary \ref{cor1}}\\
We first recall an estimate of Gel$\cprime$fond \cite{Ge}: If $k_1,m_1,q$ are positive integers with $m_1\geq 2$, $ k_1<m_1$ and $\gcd(m_1,q-1)=1$, then there exists $\delta_1>0$ such that for every $a\in\mathbb{R}$, we have
	\be\label{Gel}
	\sum\limits_{n<N}e\left(an+\frac{k_1}{m_1}S_q(n)\right)= O(N^{1-\delta_1}).
	\ee
Now, since
	\[
	\frac{1}{b}\sum\limits_{0\leq \ell<b}e\left(\frac{a}{b}\ell\right)=1\textrm{ or }0
	\]
according to whether $b$ divides $a$ or not, we have
\begin{align*}
&|\{0\leq n<N: S_{q}(n)\equiv a_1\Mod{m_1},\ S_{\alpha}(n)\equiv a_2\Mod{m_2}\}|\\
&=\sum\limits_{n<N}\frac{1}{m_1}\sum\limits_{0\leq k_1<m_1}e\left(k_1\frac{S_q(n)-a_1}{m_1}\right)\frac{1}{m_2}\sum\limits_{0\leq k_2<m_2}e\left(k_2\frac{S_{\alpha}(n)-a_2}{m_2}\right)\\
&=\frac{1}{m_1m_2}\sum\limits_{\substack{0\leq k_1<m_1\\0\leq  k_2<m_2}}e\left(-\frac{k_1a_1}{m_1}-\frac{k_2a_2}{m_2}\right)\sum\limits_{n<N}e\left(\frac{k_1}{m_1} S_{q}(n)+\frac{k_2}{m_2} S_{\alpha}(n)\right)\\
&=\frac{N}{m_1m_2}+O\Big(\frac{1}{m_1m_2}\sum\limits_{1\leq k_1<m_1}\big|\sum\limits_{n<N}e\left(\frac{k_1}{m_1}S_q(n)\right)\big|\\
&+\frac{1}{m_1m_2}\sum\limits_{\substack{0\leq k_1<m_1\\ 1\leq k_2<m_2}} \Big|\sum\limits_{n<N} e\left(\frac{k_1}{m_1} S_{q}(n)+\frac{k_2}{m_2} S_{\alpha}(n)\right)\Big| \Big)\\
&=\frac{N}{m_1m_2}+O(N^{1-\delta'}),
\end{align*}
where the last equality follows from \eqref{Gel} and Theorem \ref{thm1}.\qed\\
\section*{Acknowledgement}
The author thanks the University of Waterloo, Canada for its kind hospitality during the writing of this paper.

\begin{bibdiv}
 \begin{biblist}

 \bib{AS}{book}{
   author={Allouche, Jean-Paul},
   author={Shallit, Jeffrey},
   title={Automatic sequences},
   note={Theory, applications, generalizations},
   publisher={Cambridge University Press, Cambridge},
   date={2003},
   pages={xvi+571},
   isbn={0-521-82332-3},
   review={\MR{1997038}},
   doi={10.1017/CBO9780511546563},
}
\bib{dyn}{article}{
   author={Barat, G.},
   author={Liardet, P.},
   title={Dynamical systems originated in the Ostrowski alpha-expansion},
   journal={Ann. Univ. Sci. Budapest. Sect. Comput.},
   volume={24},
   date={2004},
   pages={133--184},
   issn={0138-9491},
   review={\MR{2168041}},
}
\bib{Rec}{article}{
   author={Berend, D.},
   author={Kolesnik, G.},
   title={Joint distribution of completely $q$-additive functions in residue
   classes},
   journal={J. Number Theory},
   volume={160},
   date={2016},
   pages={716--738},
   issn={0022-314X},
   review={\MR{3425231}},
   doi={10.1016/j.jnt.2015.09.006},
}
\bib{sur}{article}{
   author={Berth\'e, Val\'erie},
   title={Autour du syst\`eme de num\'eration d'Ostrowski},
   language={French, with French summary},
   note={Journ\'ees Montoises d'Informatique Th\'eorique (Marne-la-Vall\'ee,
   2000)},
   journal={Bull. Belg. Math. Soc. Simon Stevin},
   volume={8},
   date={2001},
   number={2},
   pages={209--239},
   issn={1370-1444},
   review={\MR{1838931}},
}
\bib{Be}{article}{
   author={B\'esineau, Jean},
   title={Ind\'ependance statistique d'ensembles li\'es \`a la fonction ``somme
   des chiffres''},
   language={French},
   journal={Acta Arith.},
   volume={20},
   date={1972},
   pages={401--416},
   issn={0065-1036},
   review={\MR{0304335}},
}
\bib{Co}{article}{
   author={Coquet, Jean},
   author={Rhin, Georges},
   author={Toffin, Philippe},
   title={Repr\'esentations des entiers naturels et ind\'ependance statistique II},
   language={French},
   journal={Ann. Inst. Fourier (Grenoble)},
   volume={31},
   date={1981},
   number={1},
   pages={ix, 1--15},
   review={\MR{629711}},
}

\bib{Fou}{article}{
   author={Coquet, Jean},
   author={Rhin, Georges},
   author={Toffin, Philippe},
   title={Fourier-Bohr spectrum of sequences related to continued fractions},
   journal={J. Number Theory},
   volume={17},
   date={1983},
   number={3},
   pages={327--336},
   issn={0022-314X},
   review={\MR{724531}},
   doi={10.1016/0022-314X(83)90050-1},
}

\bib{Ge}{article}{
   author={Gel$\cprime$fond, A. O.},
   title={Sur les nombres qui ont des propri\'et\'es additives et
   multiplicatives donn\'ees},
   language={French},
   journal={Acta Arith.},
   volume={13},
   date={1967/1968},
   pages={259--265},
   issn={0065-1036},
   review={\MR{0220693}},
}

\bib{Gr}{book}{
   author={Graham, S. W.},
   author={Kolesnik, G.},
   title={van der Corput's method of exponential sums},
   series={London Mathematical Society Lecture Note Series},
   volume={126},
   publisher={Cambridge University Press, Cambridge},
   date={1991},
   pages={vi+120},
   isbn={0-521-33927-8},
   review={\MR{1145488}},
   doi={10.1017/CBO9780511661976},
}
 
\bib{Kim}{article}{
   author={Kim, Dong-Hyun},
   title={On the joint distribution of $q$-additive functions in residue
   classes},
   journal={J. Number Theory},
   volume={74},
   date={1999},
   number={2},
   pages={307--336},
   issn={0022-314X},
   review={\MR{1671677}},
   doi={10.1006/jnth.1998.2327},
}
\bib{Kor}{book}{
   author={Korobov, N. M.},
   title={Exponential sums and their applications},
   series={Mathematics and its Applications (Soviet Series)},
   volume={80},
   note={Translated from the 1989 Russian original by Yu.\ N. Shakhov},
   publisher={Kluwer Academic Publishers Group, Dordrecht},
   date={1992},
   pages={xvi+209},
   isbn={0-7923-1647-9},
   review={\MR{1162539}},
   doi={10.1007/978-94-015-8032-8},
}
\bib{Os}{article}{
   author={Ostrowski, Alexander},
   title={Bemerkungen zur Theorie der Diophantischen Approximationen},
   language={German},
   journal={Abh. Math. Sem. Univ. Hamburg},
   volume={1},
   date={1922},
   number={1},
   pages={77--98},
   issn={0025-5858},
   review={\MR{3069389}},
   doi={10.1007/BF02940581},
}
\bib{Sol}{article}{
   author={Solinas, Jerome A.},
   title={On the joint distribution of digital sums},
   journal={J. Number Theory},
   volume={33},
   date={1989},
   number={2},
   pages={132--151},
   issn={0022-314X},
   review={\MR{1034195}},
   doi={10.1016/0022-314X(89)90002-4},
}

\bib{Spt}{thesis}{
   author={Spiegelhofer, Lukas},
   title={Correlations for numeration systems},
   note={PhD thesis, Vienna, 2014},
}

\bib{Spp}{article}{
   author={Spiegelhofer, Lukas},
   title={Pseudorandomness of the Ostrowski sum-of-digits function},
   note={	arXiv:1611.03043 [math.NT]},
}
\bib{Vaa}{article}{
   author={Vaaler, Jeffrey D.},
   title={Some extremal functions in Fourier analysis},
   journal={Bull. Amer. Math. Soc. (N.S.)},
   volume={12},
   date={1985},
   number={2},
   pages={183--216},
   issn={0273-0979},
   review={\MR{776471}},
   doi={10.1090/S0273-0979-1985-15349-2},
}
\bib{Ze}{article}{
   author={Zeckendorf, E.},
   title={Repr\'esentation des nombres naturels par une somme de nombres de
   Fibonacci ou de nombres de Lucas},
   language={French, with English summary},
   journal={Bull. Soc. Roy. Sci. Li\`ege},
   volume={41},
   date={1972},
   pages={179--182},
   issn={0037-9565},
   review={\MR{0308032}},
}
\end{biblist}
    \end{bibdiv}
\end{document}